\definecolor{OliveGreen}{rgb}{0,0.6,0}
\numberwithin{equation}{section}
\theoremstyle{plain}
\newtheorem*{theorem*}{Theorem}
\newtheorem{theorem}{Theorem}
\numberwithin{theorem}{section}
\newtheorem{proposition}[theorem]{Proposition}
\newtheorem{remark}[theorem]{Remark}
\renewcommand\verbatim@font{\normalfont\fontencoding{T1}\ttfamily}
\theoremstyle{definition}
\newtheorem{definition}[theorem]{Definition}
\newtheorem{example}[theorem]{Example}
\newtheorem{notation}[theorem]{Notation}
\newcommand{\E}{\mathbb{E}}
\newcommand{\K}{\mathbb{K}}
\newcommand{\Q}{\mathbb{Q}}
\newcommand{\F}{\mathbb{F}}
\newcommand{\R}{\mathbb{R}}
\newcommand{\sC}{\mathcal{C}}
\newcommand{\sD}{\mathcal{D}}
\newcommand{\sE}{\mathcal{E}}
\newcommand{\sT}{\mathcal{T}}
\newcommand{\sM}{\mathcal{M}}
\author{Muhammad Ardiyansyah}
\address{Department of Mathematics and Systems Analysis, Aalto University, Espoo, Finland}
\email{muhammad.ardiyansyah@aalto.fi}
\author{Luca Sodomaco}
\address{Department of Mathematics, KTH Royal Institute of Technology, SE-100 44 Stockholm, Sweden}
\email{sodomaco@kth.se}
\subjclass[2020]{62R01, 62H25, 62H22}
\keywords{factor analysis model, higher-order cumulants, symmetric tensors}
\title[Dimensions of Higher Order Factor Analysis Models]{Dimensions of Higher Order Factor Analysis Models}
\date{}
\begin{document}

\begin{abstract}
The factor analysis model is a statistical model where a certain number of hidden random variables, called factors, affect linearly the behaviour of another set of observed random variables, with additional random noise.
The main assumption of the model is that the factors and the noise are Gaussian random variables. This implies that the feasible set lies in the cone of positive semidefinite matrices. In this paper, we do not assume that the factors and the noise are Gaussian, hence the higher order moment and cumulant tensors of the observed variables are generally nonzero. This motivates the notion of $k$th-order factor analysis model, that is the family of all random vectors in a factor analysis model where the factors and the noise have finite and possibly nonzero moment and cumulant tensors up to order $k$. This subset may be described as the image of a polynomial map onto a Cartesian product of symmetric tensor spaces. Our goal is to compute its dimension and we provide conditions under which the image has positive codimension.
\end{abstract}

\maketitle

\section{Introduction}
In statistics, factor analysis is a method for describing statistical models where the involved random variables can be arranged into two distinct groups. On one hand, one considers a vector $X\in\R^p$ of {\em observed} random variables $X_i$. On the other hand, the variables $X_i$ are conditionally independent given another vector $Y\in\R^m$ of {\em hidden} or {\em latent} random variables $Y_j$, usually called {\em factors}. In applications, it is common that the number of factors $m$ is considerably smaller than the number of observed variables $n$. Factor analysis may be addressed as a dimension reduction technique where the number of dimensions is specified by the user, see \cite{spearmann1904,spearman1961abilities}.
In those two papers, Charles Spearman introduced the concept of the factor analysis model.
Spearman noticed the huge variety of measures for cognitive study, including visuo-spatial skills, artistic abilities, and reasoning. Through the factor analysis model, he was curious if the underlying general intelligence variable, which is called the ``g'' factor, and specific abilities variable, which is called the ``s'' factor, could explain them all.
In applications, factor analysis is used in many fields such as behavioral and social sciences \cite{braglio2020factor}, medical sciences \cite{samal1989clinical}, economics \cite{bai2015application}, and geography \cite{clark1974application} as a result of the technological advancements of computers.

In general, statistical models rely on certain sets of assumptions. 
In the case of factor analysis model, each variable $X_i$ is a linear combination of the factors $Y_j$ with some independent noise, namely
\begin{equation}\label{eq: linear dependence X with Y}
X=\Lambda Y+\varepsilon\,
\end{equation}
for some unknown coefficient matrix $\Lambda=(\lambda_{ij})\in \R^{p\times m}$, whose entries are sometimes referred as {\em factor loadings}, and for some ``noise'' random vector $\varepsilon\in\R^p$.
In particular, several observed variables $X_i$ might be measures of the same factor $Y_j$.
The factor analysis model may be regarded as a special instance of a much more general graphical model, where the components of a certain random vector $Z$ interact with each other, and their interaction is encoded by the edges of a directed acyclic graph with vertex set equal to the components of $Z$ \cite{Robeva_2021}. In our setting, the random vector $Z$ is the joint vector $(X,Y)$, and the interactions between $X$ and $Y$ are described by a directed bipartite graph where all edges are directed from elements of $Y$ to elements of $X$, as in Figure \ref{fig: bipartite}.

\begin{figure}
\centering
\begin{tikzpicture}
\begin{scope}[every node/.style={circle,thick,draw}]
    \node (Y1) at (-2,1.2) {$Y_1$};
    \node (Y2) at (-2,0) {$Y_2$};
    \node (Y3) at (-2,-1.2) {$Y_3$};
    \node (X1) at (2,2) {$X_1$};
    \node (X2) at (2,1) {$X_2$};
    \node (X3) at (2,0) {$X_3$} ;
    \node (X4) at (2,-1) {$X_4$} ;
    \node (X5) at (2,-2) {$X_5$} ;
\end{scope}
\begin{scope}[every node/.style={fill=white,circle},
              every edge/.style={draw=red,thick}]
    \path [->] (Y1) edge (X1);
    \path [->] (Y1) edge (X2);
    \path [->] (Y1) edge (X3);
    \path [->] (Y1) edge (X4);
    \path [->] (Y1) edge (X5);
\end{scope}
\begin{scope}[every node/.style={fill=white,circle},
              every edge/.style={draw=blue,thick}]
    \path [->] (Y2) edge (X1);
    \path [->] (Y2) edge (X2);
    \path [->] (Y2) edge (X3);
    \path [->] (Y2) edge (X4);
    \path [->] (Y2) edge (X5);
\end{scope}
\begin{scope}[every node/.style={fill=white,circle},
              every edge/.style={draw=green,thick}]
    \path [->] (Y3) edge (X1);
    \path [->] (Y3) edge (X2);
    \path [->] (Y3) edge (X3);
    \path [->] (Y3) edge (X4);
    \path [->] (Y3) edge (X5);
\end{scope}
\end{tikzpicture}
\caption{Example of a bipartite graph describing the interactions between the observed variables $X_i$ and the factors $Y_i$, for $p=5$ and $m=3$.}\label{fig: bipartite}
\end{figure}
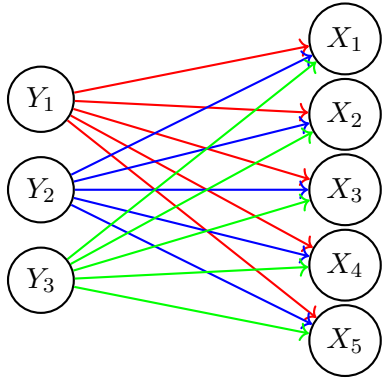

Frequently, the parametric representations of a statistical model are useful for statistical inferences. In factor analysis model, the vector $Y$ and the noise vector $\varepsilon$ are assumed to be Gaussian with mean zero. Therefore, all the information about $X$ is encoded by its covariance matrix, which can be written as the sum of a positive diagonal matrix and a symmetric matrix of a certain rank.
These considerations produce the parametrization of the factor analysis model in \cite[Proposition 1]{drton2007algebraic}.

In this paper we drop the assumption of Gaussianity of the factors and of the noise vector. In particular, in order to understand all the statistical information about the observed variables $X_i$ we need to consider the higher order moment and cumulant tensors of $X$.
We define the {\em $k$th-order cumulant (moment) factor analysis model} to be the set of tuples of length $k-1$ of symmetric tensors of orders $2,3,\ldots,k$ that are the cumulant (moment) tensors of some random vector $X$ of observed variables, see Definition \ref{def: k order cumulant and moment factor analysis model}.
Since we are assuming that the factors $Y_i$ are mutually independent, we have that all higher order cumulant tensors of $Y$ are diagonal tensors. The same property does not hold for moments of order higher than 3. Therefore, similarly as in the more general linear non-Gaussian graphical model studied in \cite[Definition 4]{Robeva_2021}, the higher order cumulants of the vector of observed variables $X$ have a nice polynomial parametrization as the sum of a diagonal tensor and a symmetric tensor obtained as the Tucker product between a diagonal tensor and a rectangular matrix. This parametrization is defined rigorously in \eqref{eq: def phi k}.
Using the described parametrization, we are able to compute the dimension of the $k$th-order cumulant factor analysis model, which for $k=2$ was computed in \cite[Section 2]{drton2007algebraic}.
What is more, using the relations in \eqref{eq: def cumulants} and in \eqref{eq: moments from cumulants}, we can polynomially map the $k$th-order cumulant factor analysis model to the $k$th-order moment factor analysis model, and vice versa.
In particular, the dimensions of the two models coincide. For this reason, we may talk about the {\em dimension of the $k$th-order factor analysis model}.
The dimension clearly depends on the maximal order $k$ of cumulant or moment tensors considered, on the number $p$ of observed variables, and on the number $m$ of factors. In particular, we provide conditions on these parameters under which the $k$th-order factor analysis model has positive codimension, namely when the previous parametrization is ``non-filling''.

One of the most important tasks in statistical modeling is to perform model selection which is the set of rules to select the best model among a set of candidate models. In model selection, we assign a score to each model that depends on its log-likelihood and number of parameters and then we choose the model with the lowest score. The most popular model selection criteria include Akaike Information Criterion (AIC), Hannan-Quinn Information Criterion (HQIC), and Bayesian Information Criterion (BIC) \cite{akaike1998information, burnham2004multimodel,sugiura1978further, schwarz1978estimating}. Therefore, it is crucial to know the dimension of a statistical model. Moreover, knowing the codimension of the model is more desirable since it can be used to measure the complexity of a statistical model.

The paper is organized as follows. After setting up notations, we recall some important properties of moment and cumulant tensors of random vectors, and we define the higher order factor analysis model.
In Section \ref{sec: dimension higher order model}, we compute the dimension of the $k$th-order factor analysis model in Theorem \ref{thm: dimension k order factor analysis model}. Secondly, we provide conditions on the parameters $k$, $p$, and $m$ such that the $k$th-order factor analysis model has positive codimension in the space of tuples of length $k-1$ of symmetric tensors of degrees $2,3,\ldots,k$. Finally, we provide a Macaulay2 code \cite{GS} to verify numerically the previous formulas.

\section{Preliminaries on higher order factor analysis models}\label{sec: prelim}

First, we set up the main notations used throughout the paper.

\begin{notation}
For any integer $n\ge 1$, we denote by $[n]$ the set $\{1,\ldots,n\}$. The ground field used throughout the paper is mainly the field of real numbers $\R$, although the preliminary definitions may be stated for an arbitrary field $\K$. By a {\em tensor of format $m_1\times\cdots\times m_r$ with entries in $\K$} we mean an $r$-dimensional array $\sT=(t_{i_1\cdots i_r})$ filled with entries in $\K$. The vector space of such tensors is usually denoted by $\K^{m_1\times\cdots\times m_r}$ and is isomorphic to the tensor product $\bigotimes_{j=1}^r\K^{m_j}$. Assuming $m_1=\cdots=m_r=m$, we say that a tensor $\sT\in(\K^m)^{\otimes r}$ is {\em symmetric} if its entries $t_{i_1\cdots i_r}$ do not change after a permutation of the indices $i_1,\ldots,i_r$. Symmetric tensors in $(\K^m)^{\otimes r}$ form a subspace denoted by $\mathrm{Sym}^r(\K^m)$. Furthermore, we denote by $\Delta^r(\K^m)$ the subset of $\mathrm{Sym}^r(\K^m)$ of diagonal tensors. In particular, the subset $\Delta^r(\K^m)$ can be identified with $\K^m$.
\end{notation}

\begin{definition}
The $r$th-order {\em moment} tensor $\sM_X^{(r)}\in(\R^p)^{\otimes r}$ of a random vector $X\in\R^p$ is defined by the element-wise equation
\begin{equation}
(\sM_X^{(r)})_{j_1\cdots j_r} = \mathrm{mom}(X_{j_1},\ldots,X_{j_r}) = \E[X_{j_1}\cdots X_{j_r}]\,.
\end{equation}
\end{definition}

\begin{definition}
The $r$th-order {\em cumulant} tensor $\sC_X^{(r)}\in(\R^p)^{\otimes r}$ of a random vector $X\in\R^p$ is defined by the element-wise equation
\begin{equation}\label{eq: def cumulants}
(\sC_X^{(r)})_{j_1\cdots j_r} = \mathrm{cum}(X_{j_1},\ldots,X_{j_r}) = \sum_{(A_1,\ldots,A_L)}(-1)^{L-1}(L-1)!\prod_{i=1}^L \mathrm{mom}((X_j)_{j\in A_i})\,,
\end{equation}
where the sum in \eqref{eq: def cumulants} is taken with respect to all partitions $\{A_1,\ldots,A_L\}$ of the set $\{j_1,\ldots,j_r\}$.
\end{definition}

\begin{remark}\label{rmk: polynomial bijection}
Suppose that all moment tensors $\sM_X^{(r)}$ of $X$ are known up to order $k$. Then equation \eqref{eq: def cumulants} allows us to compute all $r$th-order cumulant tensors $\sC_X^{(r)}$ of $X$, up to order $k$. What is more, equation \eqref{eq: def cumulants} defines a polynomial map from the set of moments up to order $k$ to the set of cumulants up to order $k$ of $X$. This polynomial map has a polynomial inverse, which is defined by the element-wise equation (see \cite[Section 2.3.4]{mccullagh1987tensor} for further details)
\begin{equation}\label{eq: moments from cumulants}
(\sM_X^{(r)})_{j_1\cdots j_r} = \mathrm{mom}(X_{j_1},\ldots,X_{j_r}) = \sum_{(A_1,\ldots,A_L)}\prod_{i=1}^L\mathrm{cum}((X_j)_{j\in A_i})\,.
\end{equation}
\end{remark}

Given a real random vector $X\in\R^p$, the first three cumulant tensors of $X$ are defined by the identities
\begin{align}\label{eq: first moments and cumulants of X}
\begin{split}
(\sC_X^{(1)})_{j} &= \E[X_j]\,,\\
(\sC_X^{(2)})_{j_1j_2} &= \E[X_{j_1}X_{j_2}] - \E[X_{j_1}]\E[X_{j_2}]\,,\\
(\sC_X^{(3)})_{j_1j_2j_3} &= \E[X_{j_1}X_{j_2}X_{j_3}] - \E[X_{j_1}]\E[X_{j_2}X_{j_3}]-\E[X_{j_2}]\E[X_{j_1}X_{j_3}]\\
&\quad-\E[X_{j_3}]\E[X_{j_1}X_{j_2}]+2\E[X_{j_1}]\E[X_{j_2}]\E[X_{j_3}]\,.\\
\end{split}
\end{align}
If additionally we impose that $X$ has mean zero, namely $\E[X_i]=0$ for all $i\in[p]$, we see immediately from the previous identities that $\sC_X^{(r)}=\sM_X^{(r)}$ for $r\in[3]$. Instead, the two tensors are in general different for $r\ge 4$, as we can see from the expression of the fourth cumulant tensor when $X$ has mean zero:
\begin{align}
\begin{split}
(\sC_X^{(4)})_{j_1j_2j_3j_4} &= \E[X_{j_1}X_{j_2}X_{j_3}X_{j_4}] - \E[X_{j_1}X_{j_2}]\E[X_{j_3}X_{j_4}]\\
&\quad-\E[X_{j_1}X_{j_3}]\E[X_{j_2}X_{j_4}]-\E[X_{j_1}X_{j_4}]\E[X_{j_2}X_{j_3}]\,.
\end{split}
\end{align}
It is almost immediate to check from their definitions that both moment tensors and cumulant tensors are symmetric tensors. In order to state another important property shared by moment and cumulant tensors, we need to recall a natural operation between tensors and matrices that generalizes the classical operation of matrix multiplication.

\begin{definition}\label{def: Tucker product}
Let $\sT$ be a tensor of format $m_1\times\cdots\times m_r$ with entries in a field $\K$. For every $\ell\in[r]$ consider a matrix $U_{\ell}=(u_{ij}^{(\ell)})\in\K^{m_\ell\times p_\ell}$. The {\em Tucker product} (or {\em multilinear multiplication}) of $\sT$ by $(U_1,\ldots,U_r)$ is the tensor $\sT\bullet(U_1,\ldots,U_r)$ of format $p_1\times\cdots\times p_r$ whose entry $(i_1,\ldots,i_r)$ is
\[
(\sT\bullet(U_1,\ldots,U_r))_{i_1\cdots i_r} = \sum_{j_1=1}^{m_1}\cdots\sum_{j_r=1}^{m_r}t_{j_1\cdots j_r}u_{j_1i_1}^{(1)}\cdots u_{j_ri_r}^{(r)}\,.
\]
In particular for $r=2$ we have that $\sT\bullet(U_1,U_2)=U_1^T\,\sT\,U_2$. If $m_1=\cdots=m_r=m$ and $p_1=\cdots=p_r=p$, we use the shorthand $\sT\bullet^r U$ to denote the Tucker product $\sT\bullet(U,\ldots,U)$, where $U\in\K^{m\times p}$ is repeated $r$ times.
\end{definition}

\begin{example}\label{ex: Tucker diagonal tensor}
In our paper, we will have $m_1=\cdots=m_r=m$, $p_1=\cdots=p_r=p$, and we will compute the Tucker product of a diagonal tensor $\sD\in\Delta^r(\R^m)$ by the transpose of the matrix $\Lambda=(\lambda_{ij})\in\R^{p\times m}$ introduced in \eqref{eq: linear dependence X with Y}. If we denote by $\delta_1,\ldots,\delta_m$ the diagonal entries of $\sD$, then
\begin{equation}
(\sD\bullet^r\Lambda^T)_{i_1\cdots i_r} = \sum_{\ell=1}^m\delta_\ell\,\lambda_{i_1,\ell}\cdots\lambda_{i_r,\ell}\,.
\end{equation}
The tensor $\sD\bullet^r\Lambda^T$ is symmetric, so it suffices to study the entries with multi-index $(i_1,\ldots,i_r)$ such that $i_1\le\cdots\le i_r$. Without loss of generality, we will assume that $\Lambda$ is a lower-triangular matrix, that is $\lambda_{ij}=0$ if $j>i$. Then
\begin{equation}\label{eq: Tucker product used in the main proof}
(\sD\bullet^r\Lambda^T)_{i_1\cdots i_r} = \sum_{\ell=1}^{\min\{m,i_1\}}\delta_\ell\,\lambda_{i_1,\ell}\cdots\lambda_{i_r,\ell}\quad\forall\,i_1\le\cdots\le i_r\,.
\end{equation}
\end{example}

In the following proposition we recall some important properties of cumulant tensors, which are discussed more in detail in \cite[Chapter 5]{comon2010handbook}.

\begin{proposition}\label{prop: properties cumulant tensors}
Let $X,Z\in\R^p$, $Y\in\R^m$ be random vectors.
\begin{itemize}
    \item[$(a)$] If $X=\Lambda Y$ for some matrix $\Lambda\in\R^{p\times m}$, then
    \begin{equation}
    \sC_X^{(r)} = \sC_Y^{(r)}\bullet^r\Lambda^T\quad\forall\,r\ge 1\,.
    \end{equation}
    \item[$(b)$] If $X,Z\in\R^p$ are mutually independent, then
    \begin{equation}
    \sC_{X+Z}^{(r)} = \sC_{X}^{(r)}+\sC_{Z}^{(r)}\quad\forall\,r\ge 1\,.
    \end{equation}
    \item[$(c)$] If the components $X_i$ of $X$ are mutually independent, then $\sC_{X}^{(r)}$ is a diagonal tensor for all $r\ge 1$, namely $(\sC_X^{(r)})_{j_1\cdots j_r}\neq 0$ only if $j_1=\cdots=j_r$.
\end{itemize}
\end{proposition}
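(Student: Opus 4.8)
The plan is to derive all three properties from a single source, the joint cumulant generating function. For a random vector $W\in\R^q$ whose moments up to the relevant order are finite, set
\[
K_W(t) = \log\E[\exp(\langle t,W\rangle)],\qquad t=(t_1,\ldots,t_q),
\]
regarded as a formal power series around the origin. This is legitimate because only finitely many derivatives at $0$ are ever used and the paper only needs moments up to a fixed order $k$; if one prefers genuine functions, every computation below goes through verbatim with the characteristic function $\E[\exp(i\langle t,W\rangle)]$ in place of the moment generating function. The exponential formula relating \eqref{eq: def cumulants} and \eqref{eq: moments from cumulants} says precisely that
\[
(\sC_W^{(r)})_{j_1\cdots j_r} = \left.\frac{\partial^r K_W}{\partial t_{j_1}\cdots\partial t_{j_r}}\right|_{t=0},
\]
and I would first record this reformulation (citing \cite[Chapter 5]{comon2010handbook} or \cite[Section 2.3.4]{mccullagh1987tensor}), after which each of $(a)$, $(b)$, $(c)$ becomes a short calculation.

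For $(a)$: if $X=\Lambda Y$ then $\langle t,X\rangle=\langle\Lambda^T t,Y\rangle$, so $K_X(t)=K_Y(\Lambda^T t)$. Writing $s=\Lambda^T t$, so that $\partial s_j/\partial t_i=\lambda_{ij}$, and using that the $\lambda_{ij}$ are constants, the chain rule gives, with no lower-order terms,
\[
\frac{\partial^r K_X}{\partial t_{i_1}\cdots\partial t_{i_r}}(t) = \sum_{j_1,\ldots,j_r}\lambda_{i_1 j_1}\cdots\lambda_{i_r j_r}\,\frac{\partial^r K_Y}{\partial s_{j_1}\cdots\partial s_{j_r}}(\Lambda^T t).
\]
Evaluating at $t=0$ and comparing with the entry-wise formula for the Tucker product in Definition \ref{def: Tucker product} (with $U_1=\cdots=U_r=\Lambda^T$, so that $(U_\ell)_{ji}=\lambda_{ij}$) yields $\sC_X^{(r)}=\sC_Y^{(r)}\bullet^r\Lambda^T$. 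Equivalently one can simply invoke multilinearity of $\mathrm{cum}(\cdot,\ldots,\cdot)$ — itself an immediate consequence of the reformulation above — and expand each argument $X_{i_\ell}=\sum_{j_\ell}\lambda_{i_\ell j_\ell}Y_{j_\ell}$.

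For $(b)$: mutual independence of $X$ and $Z$ gives $\E[\exp(\langle t,X+Z\rangle)]=\E[\exp(\langle t,X\rangle)]\,\E[\exp(\langle t,Z\rangle)]$, hence $K_{X+Z}=K_X+K_Z$, and differentiating term by term at $0$ gives $\sC_{X+Z}^{(r)}=\sC_X^{(r)}+\sC_Z^{(r)}$. For $(c)$: if $X_1,\ldots,X_p$ are mutually independent then $\E[\exp(\langle t,X\rangle)]=\prod_{i=1}^p\E[\exp(t_i X_i)]$, so $K_X(t)=\sum_{i=1}^p K_{X_i}(t_i)$ is a sum of functions each depending on a single coordinate; consequently every mixed partial derivative involving two distinct indices vanishes, and $(\sC_X^{(r)})_{j_1\cdots j_r}=0$ unless $j_1=\cdots=j_r$. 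A purely combinatorial variant of $(c)$: partition the multiset $\{j_1,\ldots,j_r\}$ by distinct value; if it has at least two blocks, these blocks involve disjoint independent components, and a joint cumulant vanishes whenever its arguments split into two mutually independent groups — which is $(b)$ applied blockwise.

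The only genuinely delicate point is analytic rather than algebraic, namely that the moment generating function may fail to converge; this is handled once and for all by the formal-power-series (or characteristic-function) convention above, and after that nothing in $(a)$--$(c)$ offers real resistance. One should, however, take minor care in $(a)$ to observe that the ``extra'' terms one might expect from the product rule do not appear, precisely because the factor loadings $\lambda_{ij}$ are constants.
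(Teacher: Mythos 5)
Your proof is correct. Note, however, that the paper does not prove this proposition at all: it is stated as a recollection of standard facts, with a pointer to \cite[Chapter 5]{comon2010handbook} (and the moment--cumulant inversion in \cite[Section 2.3.4]{mccullagh1987tensor}). So there is no in-paper argument to compare against; what you have written is essentially the standard textbook derivation that the citation stands in for. Your handling of the two genuine subtleties is appropriate: (i) since the paper defines cumulants by the combinatorial formula \eqref{eq: def cumulants} rather than as derivatives of $\log\E[\exp\langle t,W\rangle]$, you do need the exponential (M\"obius-inversion) formula identifying the two, and you correctly flag that this is the content of the equivalence between \eqref{eq: def cumulants} and \eqref{eq: moments from cumulants}; (ii) the possible non-existence of the moment generating function is correctly defused by working with formal power series (only finitely many coefficients, determined by the moments up to order $k$, are ever used) or with the characteristic function. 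The chain-rule computation in $(a)$ matches the paper's convention for $\bullet^r\Lambda^T$ in Definition \ref{def: Tucker product}, since $(\Lambda^T)_{j i}=\lambda_{ij}$. One minor inaccuracy: in your "combinatorial variant" of $(c)$, the fact that a joint cumulant vanishes when its arguments split into two mutually independent blocks is not literally "$(b)$ applied blockwise" --- $(b)$ concerns sums of independent vectors, and the block-vanishing statement is a separate (if closely related) standard fact, usually proved by the same additivity-of-$K$ argument you already use. Since your primary argument for $(c)$ via $K_X(t)=\sum_i K_{X_i}(t_i)$ is complete on its own, this aside does not affect correctness.
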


It is worth mentioning that property $(a)$ is valid also when $\sC_X^{(r)}$ and $\sC_Y^{(r)}$ are replaced by the moment tensors $\sM_X^{(r)}$ and $\sM_Y^{(r)}$. Instead, properties $(b)$ and $(c)$ are not valid in general for moment tensors of order $r\ge 3$.

The statistical model we consider in this paper is the factor analysis model described via equation \eqref{eq: linear dependence X with Y} in the introduction, where all correlations among the coordinates of a random vector $X\in\R^p$ are due to another random vector $Y\in\R^m$ whose components are usually called {\em factors}. Correlations between $X$ and $Y$ are encoded by the coefficient matrix $\Lambda=(\lambda_{ij})\in \R^{p\times m}$, and by the random vector $\varepsilon\in\R^p$.

A fundamental assumption on the (classical) factor analysis model is that $Y$ and $\varepsilon$ are random {\em Gaussian} vectors. In this paper we want to drop this assumption, hence we allow distributions with nonzero moments or cumulants up to order $k\ge 2$. For this reason we consider the following modified model.

\begin{definition}\label{def: our model}
Let $k\ge 2$ be an integer. A {\em $k$th-order factor analysis model} is a family of random vectors $X$ of observed variables that are correlated to another vector $Y$ of hidden variables (called factors) via equation \eqref{eq: linear dependence X with Y}, where $\varepsilon$ is a noise component. The model relies on the following assumptions:
\begin{enumerate}
    \item All moment and cumulant tensors of $Y$ and $\varepsilon$ exist and are finite up to order $k$.
    \item The vectors $Y$ and $\varepsilon$ are independent each other.
    \item The components of $Y$ are mutually independent, and similarly for $\varepsilon$.
    \item The vectors $Y$ and $\varepsilon$ have mean equal to zero.
\end{enumerate}
\end{definition}

The last assumption in Definition \ref{def: our model} and the relations in \eqref{eq: first moments and cumulants of X} imply that $\sC_{Y}^{(r)}=\sM_{Y}^{(r)}$ and $\sC_{\varepsilon}^{(r)}=\sM_{\varepsilon}^{(r)}$ for all $r\in[3]$, therefore $\sC_{X}^{(r)}=\sM_{X}^{(r)}$ for all $r\in[3]$.
This is no longer true for $k\ge 4$.

Definition \ref{def: our model} has a natural counterpart in terms of cumulant and moment tensors.

\begin{definition}\label{def: k order cumulant and moment factor analysis model}
Let $p$, $m$ and $k$ be nonnegative integers with $k\ge 2$.
\begin{itemize}
    \item[$(a)$] The {\em $k$th-order cumulant factor analysis model} is the subset of tuples $(\sC^{(2)},\ldots,\sC^{(k)})$ of symmetric tensors $\sC^{(r)}\in\mathrm{Sym}^r(\R^p)$ that are the cumulant tensors for some random vector $X\in\R^p$ in the $k$th-order factor analysis model. We denote this subset by $\sC^{(\le k)}_{p,m}$.
    \item[$(b)$] The {\em $k$th-order moment factor analysis model} is the subset of tuples $(\sM^{(2)},\ldots,\sM^{(k)})$ of symmetric tensors $\sM^{(r)}\in\mathrm{Sym}^r(\R^p)$ that are the moment tensors for some random vector $X\in\R^p$ in the $k$th-order factor analysis model. We denote this subset by $\sM^{(\le k)}_{p,m}$.
\end{itemize}
\end{definition}

\begin{proposition}\label{pro: parametrize k order cumulant factor analysis model}
Let $p$, $m$ and $k$ be nonnegative integers with $k\ge 2$. Consider an element $(\sC^{(2)},\ldots,\sC^{(k)})\in\sC^{(\le k)}_{p,m}$. Then for all $r\in\{2,\ldots,k\}$
\begin{equation}
\sC^{(r)} = \sD^{(r)}\bullet^r\Lambda^T + \sE^{(r)}
\end{equation}
for some $\Lambda\in\R^{p\times m}$, $\sD^{(r)}\in\Delta^r(\R^m)$, and $\sE^{(r)}\in\Delta^r(\R^p)$.
Furthermore, for $r=2$ the diagonal matrices $\sD^{(2)}$ and $\sE^{(2)}$ are positive semidefinite.
\end{proposition}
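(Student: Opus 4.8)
The plan is to unwind the definition of the model through the three properties of cumulant tensors collected in Proposition \ref{prop: properties cumulant tensors}. Fix $(\sC^{(2)},\ldots,\sC^{(k)})\in\sC^{(\le k)}_{p,m}$, so by Definition \ref{def: k order cumulant and moment factor analysis model}$(a)$ there is a random vector $X\in\R^p$ in the $k$th-order factor analysis model with $\sC^{(r)}=\sC_X^{(r)}$ for every $r\in\{2,\ldots,k\}$. By Definition \ref{def: our model}, $X=\Lambda Y+\varepsilon$ for some $\Lambda\in\R^{p\times m}$, with $Y$ and $\varepsilon$ independent, all the involved moment and cumulant tensors finite up to order $k$, and the components of each of $Y$ and $\varepsilon$ mutually independent.

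The first step is to split off the noise: since $\Lambda Y$ and $\varepsilon$ are independent random vectors in $\R^p$ (independence of $\varepsilon$ from $Y$ passes to any deterministic function of $Y$), part $(b)$ of Proposition \ref{prop: properties cumulant tensors} gives
\begin{equation*}
\sC_X^{(r)}=\sC_{\Lambda Y}^{(r)}+\sC_{\varepsilon}^{(r)}\qquad\forall\,r\in\{2,\ldots,k\}\,.
\end{equation*}
The second step is to rewrite the first summand via the linear change of variables: part $(a)$ of Proposition \ref{prop: properties cumulant tensors}, applied to $\Lambda Y$, yields $\sC_{\Lambda Y}^{(r)}=\sC_{Y}^{(r)}\bullet^r\Lambda^T$. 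The third step is to identify the two diagonal tensors: because the components of $Y$ are mutually independent, part $(c)$ of Proposition \ref{prop: properties cumulant tensors} says $\sC_Y^{(r)}\in\Delta^r(\R^m)$, and likewise $\sC_\varepsilon^{(r)}\in\Delta^r(\R^p)$. Setting $\sD^{(r)}:=\sC_Y^{(r)}$ and $\sE^{(r)}:=\sC_\varepsilon^{(r)}$ then gives exactly $\sC^{(r)}=\sD^{(r)}\bullet^r\Lambda^T+\sE^{(r)}$, with the same matrix $\Lambda$ for every $r$.

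There is no serious obstacle here: the statement is essentially a bookkeeping consequence of the model's defining assumptions together with the functorial behaviour of cumulants under linear maps, their additivity under independence, and their diagonality under mutual independence of coordinates. The only point that deserves an explicit sentence is that the matrix $\Lambda$ in the conclusion is the one coming from the representation $X=\Lambda Y+\varepsilon$ and is therefore common to all orders $r$, while the diagonal tensors $\sD^{(r)},\sE^{(r)}$ are allowed to vary with $r$; this matches the parametrization that will be written out in \eqref{eq: def phi k}.
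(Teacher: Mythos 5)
Your proposal is correct and follows essentially the same route as the paper's own proof: split $\sC_X^{(r)}=\sC_{\Lambda Y}^{(r)}+\sC_{\varepsilon}^{(r)}$ using independence (Proposition \ref{prop: properties cumulant tensors}$(b)$), rewrite $\sC_{\Lambda Y}^{(r)}=\sC_Y^{(r)}\bullet^r\Lambda^T$ via part $(a)$, and invoke part $(c)$ for diagonality of $\sC_Y^{(r)}$ and $\sC_{\varepsilon}^{(r)}$. The only difference is that you spell out a couple of points the paper leaves implicit (independence passing to $\Lambda Y$, and the matrix $\Lambda$ being common to all orders $r$), which is fine.
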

\begin{proof}
Suppose that, for some $r\in\{2,\ldots,k\}$, the tensor $\sC^{(r)}$ is the $r$th-order cumulant tensor $\sC_X^{(r)}$ of some random vector $X\in\R^p$ in the $k$th-order factor analysis model.
The second assumption in Definition \ref{def: our model}, together with Propositions \ref{prop: properties cumulant tensors}$(a)$ and \ref{prop: properties cumulant tensors}$(b)$, say that the linear relation \eqref{eq: linear dependence X with Y} implies the following relation between $\sC_X^{(r)}$ and $\sC_Y^{(r)}$:
\begin{equation}
    \sC_{X}^{(r)} = \sC_{Y}^{(r)}\bullet^r\Lambda^T+\sC_{\varepsilon}^{(r)}\,.
\end{equation}
Furthermore, the third assumption in Definition \ref{def: our model} and Proposition \ref{prop: properties cumulant tensors}$(c)$ imply that both tensors $\sC_{Y}^{(r)}$ and $\sC_{\varepsilon}^{(r)}$ are diagonal for all $r\ge 1$.
The last property for $r=2$ follows because the covariance matrix of a random vector is always positive semidefinite.
This completes the proof.
\end{proof}

The last proposition tells us that the family $\sC^{(\le k)}_{p,m}$ has a natural description as the image of a certain polynomial map. Using this fact, our goal is to compute the dimension and the codimension of $\sC^{(\le k)}_{p,m}$ in the space $\prod_{r=2}^k\mathrm{Sym}^r(\R^p)$. Furthermore, the fact that all tensors $\sE^{(r)}$ and $\sD^{(r)}$ in Proposition \ref{pro: parametrize k order cumulant factor analysis model} are diagonal simplifies a lot the computation of the dimension of $\sC^{(\le k)}_{p,m}$.

Unfortunately, a similar statement does not hold for the $k$th-order moment factor analysis model $\sM^{(\le k)}_{p,m}$: in fact, in this case the assumptions given in Definition \ref{def: our model} do not imply that the moment tensors of $Y$ and $\varepsilon$ of order $k\ge 4$ are diagonal. Nevertheless, as we discussed in Remark \ref{rmk: polynomial bijection}, the polynomial relations in \eqref{eq: def cumulants} and \eqref{eq: moments from cumulants} allow us to conclude that the dimensions of the models $\sC^{(\le k)}_{p,m}$ and $\sM^{(\le k)}_{p,m}$ coincide.
Hence it makes sense to talk about the {\em dimension of the $k$th-order factor analysis model}, without specifying if we are dealing with cumulant or moment tensors.

\begin{remark}
One may also consider the Zariski closure of $\sC^{(\le k)}_{p,m}$, which is an algebraic variety. This leads to another fundamental question, that is studying the implicitization problem of the $k$th-order factor analysis model. There have been many attempts to study the polynomial relations describing a factor analysis model, and the problem is hard even when $k=2$. See \cite{harman1976modern,bekker1987rank,drton2007algebraic} for more details. The best known polynomial invariants are the \textit{tetrads}, which arise in one-factor analysis model. Additionally, the \textit{pentads}, which are fifth degree polynomials, were found to be model invariants for two-factor analysis model. When the number of factors is greater than two, there is no significant progress in finding the model invariants due to some computational difficulties.
The implicitization problem has also been recently studied for more general linear non-Gaussian graphical models \cite{Robeva_2021,amendola2021third}.
\end{remark}

\section{Main results}\label{sec: dimension higher order model}

In this section, we prove the two main results of this paper. First, in Theorem \ref{thm: dimension k order factor analysis model} we compute the dimension of the $k$th-order factor analysis model for all $k\ge 2$ and $p\ge m+1$. Secondly, in Theorem \ref{thm: when codimension k factor analysis model is positive} we study when the $k$th-order factor analysis model has positive codimension.

\begin{theorem}\label{thm: dimension k order factor analysis model}
Assume $k\ge 2$ and $p\ge m+1$.
Let $d_k=\dim(\sM^{(\le k)}_{p,m})=\dim(\sC^{(\le k)}_{p,m})$ be the dimension of the $k$th-order factor analysis model. Then
\begin{equation}\label{eq: dim k}
d_k = (k-1)p+(k-2)m+\min\left\{pm-\binom{m}{2},\binom{p+k-1}{k}-p\right\}\,.
\end{equation}
\end{theorem}

\begin{proof}
Consider the $k$th-order cumulant factor analysis model $\sC^{(\le k)}_{p,m}$. Our goal is to compute the dimension of $\sC^{(\le k)}_{p,m}$. Then, since there is a polynomial bijective correspondence between $\sC^{(\le k)}_{p,m}$ and $\sM^{(\le k)}_{p,m}$, described by the equations \eqref{eq: def cumulants} and \eqref{eq: moments from cumulants}, we conclude that $\dim(\sM^{(\le k)}_{p,m})=\dim(\sC^{(\le k)}_{p,m})$.
Up to an orthogonal transformation, we can assume that $\Lambda\in\R^{p\times m}$ is lower-triangular, namely
\[
\Lambda\in L_{p,m}=\{\Lambda\in\R^{p\times m}\mid\text{$\lambda_{ij}=0$ if $i<j$}\}\,.
\]
Since we are assuming $p\ge m+1$, we have that $\dim(L_{p,m})=pm-\binom{m}{2}$.
Recall that we identify the subset $\Delta^r(\R^n)$ with $\R^n$ for all $r\ge 2$.
Consider the polynomial map
\begin{equation}\label{eq: def phi k}
\begin{gathered}
\varphi_k\colon(\R^p)^{\times(k-1)}\times(\R^m)^{\times(k-1)}\times L_{p,m}\longrightarrow\prod_{r=2}^k\mathrm{Sym}^r(p)\\
\varphi_k(\sE^{(2)},\ldots,\sE^{(k)},\sD^{(2)},\ldots,\sD^{(k)},\Lambda)\coloneqq (\sT^{(r)})_{r=2}^k\,,
\end{gathered}
\end{equation}
where
\begin{equation}
\sT^{(r)}\coloneqq\sE^{(r)}+\sD^{(r)}\bullet^r\Lambda^T\quad\forall\,r\in\{2,\ldots,k\}\,.
\end{equation}
The dimensions of the domain and of the image space of $\varphi_k$ are respectively
\begin{align}\label{eq: write M, N arbitrary k}
\begin{split}
M &= \dim((\R^p)^{\times(k-1)}\times(\R^m)^{\times(k-1)}\times L_{p,m}) = (k-1)p+(k-1)m+pm-\binom{m}{2}\\
N &= \dim\left(\prod_{r=2}^k\mathrm{Sym}^r(p)\right) = \sum_{r=2}^k\binom{p+r-1}{r}\\
&= \sum_{r=0}^k\binom{p+r-1}{r}-p-1=\binom{p+k}{k}-p-1\,.
\end{split}
\end{align}
The last equality in \eqref{eq: write M, N arbitrary k} follows because we applied for $a=k$ and $b=p-1$ the identity
\[
\sum_{r=0}^a\binom{b+r}{r}=\binom{a+b+1}{a}\quad\forall\,a,b\ge 0\,.
\]
The core of the proof is studying the dimension of the image of $\varphi_k$, which is equal to the rank of the Jacobian matrix $J(\varphi_k)\in\R^{N\times M}$ evaluated at a generic point in the domain of $\varphi_k$.
In fact, by Definition \ref{def: k order cumulant and moment factor analysis model} and Proposition \ref{pro: parametrize k order cumulant factor analysis model}, this is equal to computing the dimension of $\sC^{(\le k)}_{p,m}$.

In order to study the rank of $J(\varphi_k)$, we choose coordinates for the domain and for the image space of $\varphi_k$. First, we identify the diagonal tensors $\sE^{(r)}=(\varepsilon^{(r)}_{j\cdots j})$, and $\sD^{(r)}=(\delta^{(r)}_{j\cdots j})$ with the vectors
\begin{align*}
\varepsilon^{(r)}&\coloneqq(\varepsilon^{(r)}_{1},\ldots,\varepsilon^{(r)}_{p})\in\R^{p}\,,\ \varepsilon^{(r)}_{j}\coloneqq\varepsilon^{(r)}_{j\cdots j}\ \forall\,j\in[p]\\
\delta^{(r)}&\coloneqq(\delta^{(r)}_{1},\ldots,\delta^{(r)}_{m})\in\R^{m}\,,\ \delta^{(r)}_{j}\coloneqq \delta^{(r)}_{j\cdots j}\ \forall\,j\in[m]\,,
\end{align*}
respectively. Secondly, we order the variables of the matrix $\Lambda$ as in the vector
\begin{align}\label{eq: notations lambda}
\begin{split}
\lambda & \coloneqq (\lambda_{\ge\,1}^T\mid\cdots\mid\lambda_{\ge\,m}^T)\in \R^{pm-\binom{m}{2}}\\
\lambda_{\ge\,j} & \coloneqq (\lambda_{j,j},\lambda_{j+1,j},\cdots,\lambda_{p,j})^T\in\R^{p-j+1}\ \forall\,j\in[m]\,.
\end{split}
\end{align}
Finally we denote by $t_{j_1\cdots j_r}^{(r)}$ the entries of the $r$th component $\sT^{(r)}$ of $\varphi_k$, and we consider only multiindices $j_1\le\cdots\le j_k$ due to the symmetry of $\sT^{(r)}$.
Then, using equation \eqref{eq: Tucker product used in the main proof} and the definition of $\varphi_r$ in \eqref{eq: def phi k}, we get
\begin{equation}\label{eq: write explicitly coords T}
t^{(r)}_{j_1\cdots j_r} =
\begin{cases}
\varepsilon^{(r)}_{j}+\sum_{\ell=1}^{\min\{j,m\}}\delta^{(r)}_{\ell}\lambda_{j\ell}^r & \text{if $j_1=\cdots=j_r=j$}\\
\sum_{\ell=1}^{\min\{j_1,m\}}\delta^{(r)}_{\ell}\lambda_{j_1\ell}\cdots\lambda_{j_r\ell} & \text{if $j_1\le\cdots\le j_r$ and $j_u\neq j_v$ for some $u,v$}\,.
\end{cases}
\end{equation}
Furthermore, we adopt also the following notations to simplify the writing of $J(\varphi_k)$:
\begin{align}\label{eq: notations tensors arbitrary k}
\begin{split}
t_{\Delta}^{(r)} & \coloneqq(t^{(r)}_{1},\ldots, t^{(r)}_{p})^T\in \R^p\,,\ t^{(r)}_{j}\coloneqq t^{(r)}_{j\cdots j}\ \forall\,j\in[p]\\
t_{<}^{(r)} & \coloneqq(t^{(r)}_{j_1\cdots j_r}\mid j_u\neq j_v\ \text{for some}\ u,v)^T\in \R^{\binom{p+r-1}{r}-p}\,.
\end{split}
\end{align}
The Jacobian matrix $J(\varphi_k)$ can be written as
\begin{equation}\label{eq: block structure jacobian phi_k}
J(\varphi_k) =
\begin{cases}
\begin{pNiceMatrix}[first-row,first-col,margin]
& \varepsilon^{(2)} & \delta^{(2)} & \lambda \\
t_{\Delta}^{(2)} & \Block[fill=gray!10,draw]{1-1}{I_p} & \Block[fill=gray!10]{1-1}{} & \Block[fill=gray!10]{1-1}{} \\
t_{<}^{(2)} & & \Block[fill=gray!10,draw]{1-1}{A^{(2)}} & \Block[fill=gray!10,draw]{1-1}{B^{(2)}}\\
\end{pNiceMatrix} & \text{if $k=2$}\\
&\\
\begin{pNiceMatrix}[first-row,first-col,margin]
& \varepsilon^{(2)} & \cdots & \varepsilon^{(k)}& \delta^{(2)} & \cdots & \delta^{(k-1)} & \delta^{(k)} & \lambda \\
t_{\Delta}^{(2)} & \Block[fill=gray!10,draw]{1-1}{I_p} & \Block[fill=gray!10]{1-7}{} \\
\vdots & & \Block[fill=gray!10,draw]{1-1}{\ddots} & \Block[fill=gray!10]{1-6}{} \\
t_{\Delta}^{(k)} & & & \Block[fill=gray!10,draw]{1-1}{I_p} & \Block[fill=gray!10]{1-5}{} \\
t_{<}^{(2)} & & & & \Block[fill=gray!10,draw]{1-1}{A^{(2)}} & \Block[fill=gray!10]{1-1}{} & \Block[fill=gray!10]{1-1}{} & \Block[fill=gray!10]{1-1}{} & \Block[fill=gray!10,draw]{1-1}{B^{(2)}} \\
\vdots & & & & & \Block[fill=gray!10,draw]{1-1}{\ddots} & \Block[fill=gray!10]{1-1}{} & \Block[fill=gray!10]{1-1}{} & \Block[fill=gray!10,draw]{1-1}{\vdots} \\
t_{<}^{(k-1)} & & & & & & \Block[fill=gray!10,draw]{1-1}{A^{(k-1)}} & \Block[fill=gray!10]{1-1}{} & \Block[fill=gray!10,draw]{1-1}{B^{(k-1)}} \\
t_{<}^{(k)} & & & & & & & \Block[fill=gray!10,draw]{1-1}{A^{(k)}} & \Block[fill=gray!10,draw]{1-1}{B^{(k)}}
\end{pNiceMatrix} & \text{if $k\ge 3$}
\end{cases}
\end{equation}
where the white part is zero and for all $r$
\begin{equation}\label{eq: blocks Ek, Ak}
A^{(r)}=
\frac{\partial\,t^{(r)}_{<}}{\partial\,\delta^{(r)}} \in \R^{\left(\binom{p+r-1}{r}-p\right)\times m}\,,\quad B^{(r)}=
\frac{\partial\,t^{(r)}_{<}}{\partial\,\lambda} \in \R^{\left(\binom{p+r-1}{r}-p\right)\times\left(pm-\binom{m}{2}\right)}\,.
\end{equation}
Thanks to the upper triangular block structure of $J(\varphi_k)$ shown in \eqref{eq: block structure jacobian phi_k}, we conclude that
\begin{equation}\label{eq: rank jacobian sum any r}
\mathrm{rank}(J(\varphi_k)) \ge
\begin{cases}
p+\mathrm{rank}([A^{(2)}\mid B^{(2)}]) & \text{if $k=2$}\\
(k-1)p+\sum_{r=2}^{k-1}\mathrm{rank}(A^{(r)})+\mathrm{rank}([A^{(k)}\mid B^{(k)}]) & \text{if $k\ge 3$}\,.
\end{cases}
\end{equation}

\noindent{\bf Claim 1.} For all $r\in\{2,\ldots,k\}$, we have that
\begin{equation}
    \mathrm{rank}([A^{(r)}\mid B^{(r)}]) = \mathrm{rank}(B^{(r)})\,.
\end{equation}
Given $r\in\{2,\ldots,k\}$ and $i\in[m]$, for every multi-index $(j_1,\ldots,j_r)$, we have that
\begin{equation}\label{eq: partial T wrt deltas Euler identity}
\frac{\partial\,t_{j_1\cdots j_r}^{(r)}}{\partial\,\delta_i^{(r)}} = \lambda_{j_1i}\cdots\lambda_{j_ri} = \frac{1}{r}\sum_{s=1}^p\lambda_{si}\frac{\partial\,(\lambda_{j_1i}\cdots\lambda_{j_ri})}{\partial\,\lambda_{si}} = \frac{1}{r}\sum_{s=1}^p\frac{\lambda_{si}}{\delta_i^{(r)}}\frac{\partial\,t_{j_1\cdots j_r}^{(r)}}{\partial\,\lambda_{si}}\,,
\end{equation}
where the second equality follows from Euler's homogeneous function theorem and the last identity holds since, for every pair of indices $(u,v)$, by \eqref{eq: write explicitly coords T} we have that
\begin{equation}\label{eq: derivative wrt lambda}
\frac{\partial\,t_{j_1\cdots j_r}^{(r)}}{\partial\,\lambda_{uv}} = \frac{\partial\,(\delta_v^{(r)}\lambda_{j_1v}\cdots\lambda_{j_rv})}{\partial\,\lambda_{uv}} = \delta_v^{(r)}\alpha_u\lambda_{uv}^{\alpha_u-1}\prod_{\ell\mid j_\ell\neq u}\lambda_{j_\ell v}\,,
\end{equation}
where $\alpha_u(j_1,\ldots,j_r)\coloneqq|\{\ell\mid j_\ell=u\}|$.
Equation \eqref{eq: partial T wrt deltas Euler identity} implies that every column $\partial\,t_<^{(r)}/\partial\,\delta_i^{(r)}$ of $A^{(r)}$ is a linear combination of the columns $\partial\,t_<^{(r)}/\partial\,\lambda_{si}$ of $B^{(r)}$, proving Claim 1.

\noindent{\bf Claim 2.} For all $r\in\{2,\ldots,k\}$ and for a generic choice of parameters in the domain of $\varphi_k$, we have
\begin{equation}\label{eq: rank D r}
    \mathrm{rank}(A^{(r)})=\min\left\{m,\binom{p+r-1}{r}-p\right\}=m\,.
\end{equation}

By construction of $\Lambda$ and by the first equality in \eqref{eq: partial T wrt deltas Euler identity}, we get that $\partial\,t_{j_1\cdots j_r}^{(r)}/\partial\,\delta_i^{(r)}=0$ when $j_1<i$. Define
\begin{equation}\label{eq: def t j star}
t_{j,*}^{(r)} \coloneqq (t_{j_1\cdots j_r}^{(r)}\mid\text{$j_1=j$ and $j_u\neq j_v$ for some $u,v$})\quad\forall j\in[p-1]\,.
\end{equation}
In particular, each vector $t_{j,*}^{(r)}$ has $\binom{p-j+r-1}{r-1}-1$ entries. Reordering the rows of $A^{(r)}$ using the concatenation of the vectors $t_{1,*}^{(r)}$,\ldots,$t_{p-1,*}^{(r)}$, we obtain a matrix like
\begin{equation}\label{eq: block structure D_r}
D^{(r)} =
\begingroup
\renewcommand*{\arraystretch}{2}
\left(
\begin{array}{cccc|ccc}
\frac{\partial\,t_{1,*}^{(r)}}{\partial\,\delta_1^{(r)}} & \frac{\partial\,t_{2,*}^{(r)}}{\partial\,\delta_1^{(r)}} & \cdots & \frac{\partial\,t_{m,*}^{(r)}}{\partial\,\delta_1^{(r)}} & \frac{\partial\,t_{m+1,*}^{(r)}}{\partial\,\delta_1^{(r)}} & \cdots & \frac{\partial\,t_{p-1,*}^{(r)}}{\partial\,\delta_1^{(r)}}\\
 & \frac{\partial\,t_{2,*}^{(r)}}{\partial\,\delta_2^{(r)}} & \cdots & \frac{\partial\,t_{m,*}^{(r)}}{\partial\,\delta_2^{(r)}} & \frac{\partial\,t_{m+1,*}^{(r)}}{\partial\,\delta_2^{(r)}} & \cdots & \frac{\partial\,t_{p-1,*}^{(r)}}{\partial\,\delta_2^{(r)}}\\
 & & \ddots & \vdots & \vdots & & \vdots\\
 & & & \frac{\partial\,t_{m,*}^{(r)}}{\partial\,\delta_m^{(r)}} & \frac{\partial\,t_{m+1,*}^{(r)}}{\partial\,\delta_m^{(r)}} & \cdots & \frac{\partial\,t_{p-1,*}^{(r)}}{\partial\,\delta_m^{(r)}}
\end{array}
\right)^T\,,
\endgroup
\end{equation}
where the void blocks are identically zero, and the right block is not present if $p=m+1$.
Since each entry of the block column vectors $\partial\,t_{i,*}^{(r)}/\partial\,\delta_i^{(r)}$ is a monomial in the variables $\lambda_{uv}$, the matrix $A^{(r)}$ has full rank $\min\left\{m,\binom{p+r-1}{r}-p\right\}$ if $\lambda_{uv}\neq 0$ for all $u\ge v$. The last identity in \eqref{eq: rank D r} follows by the assumption $p\ge m+1$.

\noindent{\bf Claim 3.} For all $k\ge 2$ and for a generic choice of parameters in the domain of $\varphi_k$, we have
\begin{equation}\label{eq: rank L^k}
    \mathrm{rank}(B^{(k)})=\min\left\{pm-\binom{m}{2},\binom{p+k-1}{k}-p\right\}\,.
\end{equation}

The case $k=2$ is covered in the proof of \cite[Theorem 2]{drton2007algebraic}, where the matrix $B^{(2)}$ is called $A$ and has full rank equal to $\min\{pm-\binom{m}{2},\binom{p}{2}\}$. Therefore, we now prove Claim 3 for $k\ge 3$.
By construction of $\Lambda$ and by the first equality in \eqref{eq: partial T wrt deltas Euler identity}, for all $u\ge v$, the partial derivative $\partial\,t_{j_1\cdots j_r}^{(k)}/\partial\,\lambda_{uv}^{(k)}$ is identically zero if $j_1<v$. Using the notations introduced in \eqref{eq: notations lambda} and \eqref{eq: def t j star}, we can reorder the rows and the columns of $B^{(k)}$ and obtain a matrix like
\begin{equation}\label{eq: block structure L_r}
B^{(k)} =
\begingroup
\renewcommand*{\arraystretch}{2}
\left(
\begin{array}{cccc|ccc}
\frac{\partial\,t_{1,*}^{(k)}}{\partial\,\lambda_{\ge\,1}} & \frac{\partial\,t_{2,*}^{(k)}}{\partial\,\lambda_{\ge\,1}} & \cdots & \frac{\partial\,t_{m,*}^{(k)}}{\partial\,\lambda_{\ge\,1}} & \frac{\partial\,t_{m+1,*}^{(k)}}{\partial\,\lambda_{\ge\,1}} & \cdots & \frac{\partial\,t_{p-1,*}^{(k)}}{\partial\,\lambda_{\ge\,1}}\\
 & \frac{\partial\,t_{2,*}^{(k)}}{\partial\,\lambda_{\ge\,2}} & \cdots & \frac{\partial\,t_{m,*}^{(k)}}{\partial\,\lambda_{\ge\,2}} & \frac{\partial\,t_{m+1,*}^{(k)}}{\partial\,\lambda_{\ge\,2}} & \cdots & \frac{\partial\,t_{p-1,*}^{(k)}}{\partial\,\lambda_{\ge\,2}}\\
 & & \ddots & \vdots & \vdots & \vdots \\
 & & & \frac{\partial\,t_{m,*}^{(k)}}{\partial\,\lambda_{\ge\,m}} & \frac{\partial\,t_{m+1,*}^{(k)}}{\partial\,\lambda_{\ge\,m}} & \cdots & \frac{\partial\,t_{p-1,*}^{(k)}}{\partial\,\lambda_{\ge\,m}}
\end{array}
\right)^T\,,
\endgroup
\end{equation}
where the void blocks are identically zero, and the right block is not present if $p=m+1$. Using the lower triangular block structure of $B^{(k)}$, we conclude that $\mathrm{rank}(B^{(k)}) \ge \sum_{i=1}^m\mathrm{rank}(\partial\,t_{i,*}^{(k)}/\partial\,\lambda_{\ge\,i})$.
If we show that every block matrix $\partial\,t_{i,*}^{(k)}/\partial\,\lambda_{\ge\,i}$ has full rank for a generic choice of parameters, then $B^{(k)}$ has full rank given by \eqref{eq: rank L^k}.

First, we assume $\delta_s^{(k)}\neq 0$ for all $s\in[m]$, and from now on we set $\delta_s^{(k)}=1$ for all $s\in[m]$ for simplicity.
Every matrix $\partial\,t_{i,*}^{(k)}/\partial\,\lambda_{\ge\,i}$ has $\binom{p-i+k-1}{k-1}-1$ rows and $p-i+1$ columns, see the following equation \eqref{eq: example r=4} for an explicit example when $k=4$, $p=5$, $m=2$, and $i=1$:
\begin{equation}\label{eq: example r=4}
\frac{\partial\,t_{1,*}^{(4)}}{\partial\,\lambda_{\ge\,1}}=
\begingroup
\renewcommand*{\arraystretch}{1.3}
\begin{scriptsize}
\begin{pNiceMatrix}[first-row,first-col,margin]
& \lambda_{11} & \lambda_{21} & \lambda_{31} & \lambda_{41} & \lambda_{51} \\
t_{1112}^{(4)} & \boldsymbol{3\,\lambda_{11}^{2}\lambda_{21}} & \boldsymbol{\lambda_{11}^{3}}&0&0&0\\
t_{1113}^{(4)} & \boldsymbol{3\,\lambda_{11}^{2}\lambda_{31}}&0&\boldsymbol{\lambda_{11}^{3}}&0&0\\
t_{1114}^{(4)} & \boldsymbol{3\,\lambda_{11}^{2}\lambda_{41}}&0&0&\boldsymbol{\lambda_{11}^{3}}&0\\
t_{1115}^{(4)} & \boldsymbol{3\,\lambda_{11}^{2}\lambda_{51}}&0&0&0&{\lambda_{11}^{3}}\\
t_{1122}^{(4)} & \boldsymbol{2\,\lambda_{11}\lambda_{21}^{2}}&\boldsymbol{2\,\lambda_{11}^{2}\lambda_{21}}&0&0&0\\
t_{1123}^{(4)} & 2\,\lambda_{11}\lambda_{21}\lambda_{31}&\lambda_{11}^{2}\lambda_{31}&\lambda_{11}^{2}\lambda_{21}&0&0\\
t_{1124}^{(4)} & 2\,\lambda_{11}\lambda_{21}\lambda_{41}&\lambda_{11}^{2}\lambda_{41}&0&\lambda_{11}^{2}\lambda_{21}&0\\
t_{1125}^{(4)} & 2\,\lambda_{11}\lambda_{21}\lambda_{51}&\lambda_{11}^{2}\lambda_{51}&0&0&\lambda_{11}^{2}\lambda_{21}\\
t_{1133}^{(4)} & \boldsymbol{2\,\lambda_{11}\lambda_{31}^{2}}&0&\boldsymbol{2\,\lambda_{11}^{2}\lambda_{31}}&0&0\\
t_{1134}^{(4)} & 2\,\lambda_{11}\lambda_{31}\lambda_{41}&0&\lambda_{11}^{2}\lambda_{41}&\lambda_{11}^{2}\lambda_{31}&0\\
t_{1135}^{(4)} & 2\,\lambda_{11}\lambda_{31}\lambda_{51}&0&\lambda_{11}^{2}\lambda_{51}&0&\lambda_{11}^{2}\lambda_{31}\\
t_{1144}^{(4)} & \boldsymbol{2\,\lambda_{11}\lambda_{41}^{2}}&0&0&\boldsymbol{2\,\lambda_{11}^{2}\lambda_{41}}&0\\
t_{1145}^{(4)} & 2\,\lambda_{11}\lambda_{41}\lambda_{51}&0&0&\lambda_{11}^{2}\lambda_{51}&\lambda_{11}^{2}\lambda_{41}\\
t_{1155}^{(4)} & \boldsymbol{2\,\lambda_{11}\lambda_{51}^{2}}&0&0&0&\boldsymbol{2\,\lambda_{11}^{2}\lambda_{51}}\\
t_{1222}^{(4)} & \boldsymbol{\lambda_{21}^{3}}&\boldsymbol{3\,\lambda_{11}\lambda_{21}^{2}}&0&0&0\\
t_{1223}^{(4)} & \lambda_{21}^{2}\lambda_{31}&2\,\lambda_{11}\lambda_{21}\lambda_{31}&\lambda_{11}\lambda_{21}^{2}&0&0\\
t_{1224}^{(4)} & \lambda_{21}^{2}\lambda_{41}&2\,\lambda_{11}\lambda_{21}\lambda_{41}&0&\lambda_{11}\lambda_{21}^{2}&0\\
t_{1225}^{(4)} & \lambda_{21}^{2}\lambda_{51}&2\,\lambda_{11}\lambda_{21}\lambda_{51}&0&0&\lambda_{11}\lambda_{21}^{2}\\
t_{1233}^{(4)} & \lambda_{21}\lambda_{31}^{2}&\lambda_{11}\lambda_{31}^{2}&2\,\lambda_{11}\lambda_{21}\lambda_{31}&0&0\\
t_{1234}^{(4)} & \lambda_{21}\lambda_{31}\lambda_{41}&\lambda_{11}\lambda_{31}\lambda_{41}&\lambda_{11}\lambda_{21}\lambda_{41}&\lambda_{11}\lambda_{21}\lambda_{31}&0\\
t_{1235}^{(4)} & \lambda_{21}\lambda_{31}\lambda_{51}&\lambda_{11}\lambda_{31}\lambda_{51}&\lambda_{11}\lambda_{21}\lambda_{51}&0&\lambda_{11}\lambda_{21}\lambda_{31}\\
t_{1244}^{(4)} & \lambda_{21}\lambda_{41}^{2}&\lambda_{11}\lambda_{41}^{2}&0&2\,\lambda_{11}\lambda_{21}\lambda_{41}&0\\
t_{1245}^{(4)} & \lambda_{21}\lambda_{41}\lambda_{51}&\lambda_{11}\lambda_{41}\lambda_{51}&0&\lambda_{11}\lambda_{21}\lambda_{51}&\lambda_{11}\lambda_{21}\lambda_{41}\\
t_{1255}^{(4)} & \lambda_{21}\lambda_{51}^{2}&\lambda_{11}\lambda_{51}^{2}&0&0&2\,\lambda_{11}\lambda_{21}\lambda_{51}\\
t_{1333}^{(4)} & \boldsymbol{\lambda_{31}^{3}}&0&\boldsymbol{3\,\lambda_{11}\lambda_{31}^{2}}&0&0\\
t_{1334}^{(4)} & \lambda_{31}^{2}\lambda_{41}&0&2\,\lambda_{11}\lambda_{31}\lambda_{41}&\lambda_{11}\lambda_{31}^{2}&0\\
t_{1335}^{(4)} & \lambda_{31}^{2}\lambda_{51}&0&2\,\lambda_{11}\lambda_{31}\lambda_{51}&0&\lambda_{11}\lambda_{31}^{2}\\
t_{1344}^{(4)} & \lambda_{31}\lambda_{41}^{2}&0&\lambda_{11}\lambda_{41}^{2}&2\,\lambda_{11}\lambda_{31}\lambda_{41}&0\\
t_{1345}^{(4)} & \lambda_{31}\lambda_{41}\lambda_{51}&0&\lambda_{11}\lambda_{41}\lambda_{51}&\lambda_{11}\lambda_{31}\lambda_{51}&\lambda_{11}\lambda_{31}\lambda_{41}\\
t_{1355}^{(4)} & \lambda_{31}\lambda_{51}^{2}&0&\lambda_{11}\lambda_{51}^{2}&0&2\,\lambda_{11}\lambda_{31}\lambda_{51}\\
t_{1444}^{(4)} & \boldsymbol{\lambda_{41}^{3}}&0&0&\boldsymbol{3\,\lambda_{11}\lambda_{41}^{2}}&0\\
t_{1445}^{(4)} & \lambda_{41}^{2}\lambda_{51}&0&0&2\,\lambda_{11}\lambda_{41}\lambda_{51}&\lambda_{11}\lambda_{41}^{2}\\
t_{1455}^{(4)} & \lambda_{41}\lambda_{51}^{2}&0&0&\lambda_{11}\lambda_{51}^{2}&2\,\lambda_{11}\lambda_{41}\lambda_{51}\\
t_{1555}^{(4)} & \boldsymbol{\lambda_{51}^{3}}&0&0&0&\boldsymbol{3\,\lambda_{11}\lambda_{51}^{2}}
\end{pNiceMatrix}\,.
\end{scriptsize}
\endgroup
\end{equation}
Fix an index $s\in\{i+1,\ldots,p\}$. We want to find all non-decreasing tuples $(j_2,\ldots,j_k)$ with $j_2\ge i$ such that $\partial\,t_{i,j_2,\ldots,j_k}^{(k)}/\partial\,\lambda_{ji}$ is not identically zero if and only if $j=s$. The latter condition implies that all indices $j_2,\ldots, j_k$ are in $\{i,s\}$, and at least one is equal to $s$. This means that there are exactly $k-1$ such choices of multiindices. In the example highlighted in \eqref{eq: example r=4}, for each $s\in\{2,3,4,5\}$, there are exactly three triples $(j_2,j_3,j_4)$ such that in the row labeled by $t_{1j_2j_3j_4}^{(4)}$, the only non-zero element (apart from the one in the first column) is in the $s$th column, in particular $(j_2,j_3,j_4)\in\{(i,i,s),(i,s,s),(s,s,s)\}$.
In total, we are selecting $(k-1)(p-i)$ rows of the matrix $\partial\,t_{i,*}^{(r)}/\partial\,\lambda_{\ge\,i}$, and $(k-1)(p-i)$ is at least the number of columns $p-i+1$ if and only if $(k-2)(p-i)\ge 1$. The latter inequality is always satisfied when $k\ge 3$, since $i\le p-1$. Call $R_i^{(k)}$ the submatrix of $\partial\,t_{i,*}^{(k)}/\partial\,\lambda_{\ge\,i}$ just constructed. For example, from the matrix in \eqref{eq: example r=4}, we extract the entries in bold, thus getting the submatrix
\begin{equation}
R_1^{(4)}=
\begingroup
\renewcommand*{\arraystretch}{1.3}
\begin{small}
\begin{pNiceMatrix}[first-row,first-col,margin]
& \lambda_{11} & \lambda_{21} & \lambda_{31} & \lambda_{41} & \lambda_{51}\\
t_{1112}^{(4)} & 3\,\lambda_{11}^{2}\lambda_{21}&\lambda_{11}^{3}&0&0&0\\
t_{1113}^{(4)} & 3\,\lambda_{11}^{2}\lambda_{31}&0&\lambda_{11}^{3}&0&0\\
t_{1114}^{(4)} & 3\,\lambda_{11}^{2}\lambda_{41}&0&0&\lambda_{11}^{3}&0\\
t_{1115}^{(4)} & 3\,\lambda_{11}^{2}\lambda_{51}&0&0&0&\lambda_{11}^{3}\\
t_{1122}^{(4)} & 2\,\lambda_{11}\lambda_{21}^{2}&2\,\lambda_{11}^{2}\lambda_{21}&0&0&0\\
t_{1133}^{(4)} & 2\,\lambda_{11}\lambda_{31}^{2}&0&2\,\lambda_{11}^{2}\lambda_{31}&0&0\\
t_{1144}^{(4)} & 2\,\lambda_{11}\lambda_{41}^{2}&0&0&2\,\lambda_{11}^{2}\lambda_{41}&0\\
t_{1155}^{(4)} & 2\,\lambda_{11}\lambda_{51}^{2}&0&0&0&2\,\lambda_{11}^{2}\lambda_{51}\\
t_{1222}^{(4)} & \lambda_{21}^{3}&3\,\lambda_{11}\lambda_{21}^{2}&0&0&0\\
t_{1333}^{(4)} & \lambda_{31}^{3}&0&3\,\lambda_{11}\lambda_{31}^{2}&0&0\\
t_{1444}^{(4)} & \lambda_{41}^{3}&0&0&3\,\lambda_{11}\lambda_{41}^{2}&0\\
t_{1555}^{(4)} & \lambda_{51}^{3}&0&0&0&3\,\lambda_{11}\lambda_{51}^{2}
\end{pNiceMatrix}\,.
\end{small}
\endgroup
\end{equation}
It remains to show that the matrix $R_i^{(k)}$ has full rank $p-i+1$ for a generic choice of $\Lambda$.
Up to reordering the indices, we can assume that the first $p-i+1$ rows of the vector $t_{i,*}^{(k)}$ are $t_{i\cdots i,i+1}^{(k)},\ldots,t_{i\cdots i,p}^{(k)},t_{i\cdots i,i+1,i+1}^{(k)}$. Using \eqref{eq: derivative wrt lambda}, the top maximal minor of $R_i^{(k)}$ is equal to
\begin{equation}\label{eq: max minor R^k}
\begin{gathered}
\begingroup
\renewcommand*{\arraystretch}{1.3}
\left|
\begin{array}{c|ccc}
(k-1)\,\lambda_{ii}^{k-2}\lambda_{i+1,i} & \lambda_{ii}^{k-1} & & 0 \\
\vdots& &\ddots& \\
(k-1)\,\lambda_{ii}^{k-2}\lambda_{p,i} & 0 & & \lambda_{ii}^{k-1} \\\hline
(k-2)\,\lambda_{ii}^{k-3}\lambda_{i+1,i}^2 & 2\,\lambda_{ii}^{k-2}\lambda_{i+1,i} & \cdots & 0
\end{array}
\right|=
\endgroup
\\
= (-1)^{p-i+2}\,\lambda_{ii}^{(p-i-1)(k-1)}[(k-2)\,\lambda_{ii}^{k-3}\lambda_{ii}^{k-1}\lambda_{i+1,i}^2-2\,\lambda_{ii}^{k-2}\lambda_{i+1,i}(k-1)\,\lambda_{ii}^{k-2}\lambda_{i+1,i}]\\
= (-1)^{p-i+1}(k-1)\,\lambda_{ii}^{(p-i+1)(k-1)-2}\lambda_{i+1,i}^2\,,
\end{gathered}
\end{equation}
where in the second equality in \eqref{eq: max minor R^k} we have used the Laplace expansion along the last row.
Since the previous minor is a monomial in the $\lambda_{uv}$'s, it is nonzero if all variables $\lambda_{uv}$ are nonzero. This concludes the proof of Claim 3.

Summing up, Claims 1,2,3 tell us that, for a generic choice of parameters $\sE^{(2)}$,\ldots, $\sE^{(k)}$, $\sD^{(2)}$,\ldots, $\sD^{(k)}$, and $\Lambda$, the diagonal blocks $A^{(2)}$,\ldots,$A^{(k-1)}$, and $[A^{(k)}\mid B^{(k)}]$ have full rank. This implies that equality holds in \eqref{eq: rank jacobian sum any r}, in particular for $k=2$
\begin{equation}\label{eq: rank jacobian sum any r part 2 k=2}
\mathrm{rank}(J(\varphi_2)) = p+\mathrm{rank}(B^{(2)})
= p+\min\left\{pm-\binom{m}{2},\binom{p+1}{2}-p\right\}\,,
\end{equation}
whereas for $k\ge 3$
\begin{align}\label{eq: rank jacobian sum any r part 2 k>=3}
\begin{split}
\mathrm{rank}(J(\varphi_k)) &= (k-1)p+\sum_{r=2}^{k-1}\mathrm{rank}(A^{(r)})+\mathrm{rank}(B^{(k)})\\
&= (k-1)p+(k-2)m+\min\left\{pm-\binom{m}{2},\binom{p+k-1}{k}-p\right\}\,.
\end{split}
\end{align}
This yields the dimension formula in \eqref{eq: dim k} and the proof is complete.
\end{proof}

\begin{remark}
Note that the model $\sC^{(\le k)}_{p,m}$ has the same dimension of the image of the restriction $\varphi_k|_{\{\sD^{(2)}=I_m\}}$ for all $k\ge 2$. Indeed, without loss of generality, we can assume that the covariance matrix of the vector $Y$ of hidden variables is the identity matrix of size $m$. 
However, the rank of the Jacobian matrix of $\varphi_k|_{\{\sD^{(2)}=I_m\}}$ is equal to the rank of $J(\varphi_k)$. In fact, we have shown in Claim 1 of the proof of Theorem \ref{thm: dimension k order factor analysis model} that the columns corresponding to the vector of parameters $\delta^{(2)}$ are in the span of the columns corresponding to the vector of parameters $\lambda$.
\end{remark}

\begin{remark}\label{rmk: simplified formula d_k}
Consider the dimension formula \eqref{eq: dim k}. We show that
\begin{equation}\label{eq: inequality codimension any k>=3}
pm-\binom{m}{2}\le\binom{p+k-1}{k}-p
\end{equation}
for all $k\ge 3$ and $p\ge m+1$. First, we observe that the sequence $\{\binom{p+k-1}{k}\}$ is nondecreasing in $k$, therefore it is sufficient to show that $pm-\binom{m}{2}\le\binom{p+2}{3}-p$ for all $p\ge m+1$.
The previous inequality is equivalent to $f(p)\coloneqq p^3+3p^2-2(3m+2)p+3m(m-1)\ge 0$. We have $f(m+1)=m(m^2+3m-4)\ge 0$ for all $m\ge 1$, and one verifies that the largest root of $f'(p)$ is $-1+\sqrt{2m+\frac{7}{3}}\le m+1$, therefore $f(p)\ge0$ for all $p\ge m+1$. This implies that \eqref{eq: inequality codimension any k>=3} holds for all $k\ge 3$ and $p\ge m+1$, hence in this case the dimension formula \eqref{eq: dim k} simplifies as
\begin{equation}\label{eq: simplified dim k}
d_k = (m+k-1)p+(k-2)m-\binom{m}{2}\,.    
\end{equation}
\end{remark}

\begin{theorem}\label{thm: when codimension k factor analysis model is positive}
Let $c_k$ be the codimension $\mathrm{codim}(\sM^{(\le k)}_{p,m})=\mathrm{codim}(\sC^{(\le k)}_{p,m})$ of the $k$th-order factor analysis model as a subvariety in $\prod_{r=2}^k\mathrm{Sym}^r(p)$.
\begin{enumerate}
    \item[$(i)$] If $k=2$, then $c_k>0$ if and only if $p\ge\left\lfloor m+\frac{1}{2}\sqrt{8m+1}+\frac{1}{2}\right\rfloor+1$.
    \item[$(ii)$] If $k\ge 3$ and $p\ge m+1$, then $c_k=h_m^{(k)}(p)/k!$, where
    \begin{equation}\label{eq: h_m k}
    h_m^{(k)}(p)=\prod_{i=1}^k(p+i)-k!(k+m)p+\frac{k!}{2}[m^2+(3-2k)m-2]\,.
    \end{equation}
    In particular
    \begin{enumerate}
    \item[$(a)$] if $m\in[2k-3]$, then $h_m^{(k)}(p)$ has a unique positive root $p^{(k)}$, therefore $c_k>0$ if $p\ge\lfloor p^{(k)}\rfloor+1$.
    \item[$(b)$] for finitely many values of $m\ge 2k-2$, the polynomial $h_m^{(k)}(p)$ has two positive roots, the largest denoted by $p^{(k)}$. Therefore $c_k>0$ if $p\ge\lfloor p^{(k)}\rfloor+1$.
    \item[$(c)$] There exists an integer $m^*\ge 2k-2$ such that $h_m^{(k)}(p)$ has no positive roots for $m\ge m^*$, in particular $c_k>0$ for all $m\ge m^*$ and $p\ge m+1$.
    \end{enumerate}
\end{enumerate}
\end{theorem}

Before starting the proof of Theorem \ref{thm: when codimension k factor analysis model is positive}, we recall a property of real univariate polynomials related to a theorem of P\'olya \cite{polya1904positive} (see also \cite[Theorem 56, p. 57]{hardy1952inequalities}). The following statement is taken from \cite[Exercise 3(a), Chapter 3]{stanley2013algebraic}.

\begin{proposition}\label{prop: property *}
Let $f\in\R[x]$ be a nonzero polynomial with real coefficients. The following two conditions are equivalent.
\begin{itemize}
\item[$(i)$] There exists a nonzero polynomial $g\in\R[x]$ with real coefficients such that all coefficients of $fg$ are nonnegative.
\item[$(ii)$] There does not exist a real number $a>0$ such that $f(a)=0$.
\end{itemize}
\end{proposition}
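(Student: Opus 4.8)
The plan is to prove the two implications separately, with essentially all of the work concentrated in $(ii)\Rightarrow(i)$.

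For $(i)\Rightarrow(ii)$ I would argue by contraposition. Suppose $f(a)=0$ for some $a>0$, and let $g\in\R[x]$ be any nonzero polynomial. Then $fg$ is again nonzero, since $\R[x]$ is an integral domain, and $(fg)(a)=f(a)g(a)=0$. On the other hand, if all coefficients of $fg=\sum_i c_ix^i$ were nonnegative, then because $fg\neq 0$ at least one $c_i$ is strictly positive, so evaluating at $a>0$ gives $\sum_i c_ia^i>0$, a contradiction. Hence no admissible $g$ exists, which is exactly the negation of $(i)$.

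For $(ii)\Rightarrow(i)$ the idea is to reduce to a single normalized quadratic factor and then settle that case by an angle-doubling induction. I would factor $f$ over $\R$ as $f=c\prod_i(x-r_i)\prod_j q_j$, where the $r_i\le 0$ are the real roots (all nonpositive by hypothesis) and the $q_j$ are the irreducible quadratic factors coming from the genuinely complex conjugate pairs of roots. Each linear factor equals $x+|r_i|$ and already has nonnegative coefficients, and a product of polynomials with nonnegative coefficients again has nonnegative coefficients. So it suffices to produce, for each $q_j$, a polynomial $g_j$ with $q_jg_j$ having nonnegative coefficients; then $g:=\mathrm{sgn}(c)\prod_jg_j$ makes every coefficient of $fg$ nonnegative, the sign of $c$ being absorbed into $g$ (which is legitimate, since $g$ itself is not required to have nonnegative coefficients).

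Writing the roots of $q_j$ as $\rho e^{\pm i\theta}$ with $\rho>0$ and $\theta\in(0,\pi)$, we have $q_j(x)=x^2-2\rho\cos\theta\,x+\rho^2$, and the positive scaling $x\mapsto\rho x$ (which preserves nonnegativity of coefficients) reduces everything to the normalized quadratic $q_\theta(x)=x^2-2\cos\theta\,x+1$. The key lemma I would then prove is: for every $\theta\in(0,\pi)$ some polynomial with nonnegative coefficients is divisible by $q_\theta$. The base case is $\theta\ge\pi/3$, where $\cos\theta\le 1/2$ and $q_\theta(x)(x+1)=x^3+(1-2\cos\theta)x^2+(1-2\cos\theta)x+1$ has nonnegative coefficients. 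For $\theta<\pi/3<\pi/2$ I would use the identity $q_\theta(x)\,(x^2+2\cos\theta\,x+1)=x^4-2\cos(2\theta)\,x^2+1=q_{2\theta}(x^2)$, where the multiplier $x^2+2\cos\theta\,x+1$ has nonnegative coefficients (as $\cos\theta>0$) and the product is $q_{2\theta}$ evaluated at $x^2$. This doubles the angle while keeping it inside $(0,\pi)$, so iterating and then invoking the base case in the variable $x^{2^s}$ completes the induction.

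The main obstacle is exactly this key lemma, and in particular certifying that the angle-doubling both stays in the admissible range and terminates: the doubling step is only valid when $\theta<\pi/2$ (so that the quadratic multiplier has nonnegative coefficients), one must check that $2\theta$ never overshoots $\pi$ in the regime $\theta<\pi/3$ where it is applied, and that the geometric growth of the angle forces it above $\pi/3$ after finitely many steps. Once the lemma is in hand, the remaining bookkeeping — recombining the linear factors, the per-quadratic multipliers, and the sign of the leading coefficient $c$ into a single $g$ — is routine.
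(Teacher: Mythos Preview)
Your proof is correct. The paper, however, does not actually prove this proposition: it is stated as a known result attributed to P\'olya (with references to Hardy--Littlewood--P\'olya and to an exercise in Stanley), and then simply applied in the proof of Theorem~\ref{thm: dimension k order factor analysis model}. So there is no ``paper's own proof'' to compare against.

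That said, your argument is precisely the classical one. The direction $(i)\Rightarrow(ii)$ is the trivial observation you give. For $(ii)\Rightarrow(i)$, factoring over $\R$, absorbing the sign of the leading coefficient and the nonpositive linear factors, and then handling each irreducible quadratic $x^2-2\rho\cos\theta\,x+\rho^2$ by rescaling and angle-doubling until $\theta\ge\pi/3$ is exactly P\'olya's method. Your termination and range checks are fine: you only double when $\theta<\pi/3$, so $2\theta<2\pi/3<\pi$ and the multiplier $x^2+2\cos\theta\,x+1$ has nonnegative coefficients; geometric growth of the angle forces it into $[\pi/3,2\pi/3)$ after finitely many steps, where multiplication by $x^{2^s}+1$ finishes the job. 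The recombination into a single $g$ via $\mathrm{sgn}(c)\prod_j g_j$ is routine, as you note.
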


\begin{proof}[Proof of Theorem \ref{thm: when codimension k factor analysis model is positive}]
The case $k=2$ is studied in \cite[Theorem 2]{drton2007algebraic}, so we assume $k\ge 3$.
The codimension $c_k$ is the difference between the dimension $N=\binom{p+k}{k}-p-1$ of the image space $\prod_{r=2}^k\mathrm{Sym}^r(p)$ computed in \eqref{eq: write M, N arbitrary k} and the value $d_k$ computed in \eqref{eq: simplified dim k}. This yields the polynomial $h_m^{(k)}(p)$ in \eqref{eq: h_m k} up to division by $k!$.
Given $m$ and $k$, we want to find the minimum integer $p^{(k)}$ such that $h_m^{(k)}(p)>0$ for every $p\ge p^{(k)}$. In other words, we want to give sufficient conditions on $p$ such that $c_k>0$.
For all $j\in\{0,\ldots,k\}$, let
\begin{equation}
e_j(x_1,\ldots,x_k)\coloneqq
\begin{cases}
1 & \text{if $j=0$}\\
\sum_{1\le\ell_1<\cdots<\ell_j\le k}x_{\ell_1}\cdots x_{\ell_j} & \text{if $j\in[k]$}
\end{cases}
\end{equation}
be the $j$-th elementary symmetric polynomial on $x_1,\ldots,x_k$.
We also denote by $e_j^{(k)}$ the number $e_j(1,\ldots,k)$.
We can rewrite $h_m^{(k)}(p)$ as
\begin{align*}
h_m^{(k)}(p) &= \sum_{j=0}^k e_j^{(k)}p^{k-j}-k!(k+m)p+\frac{k!}{2}[m^2+(3-2k)m-2]\\
&= \sum_{j=0}^{k-2} e_j^{(k)}p^{k-j}+[e_{k-1}^{(k)}-k!(k+m)]p+\frac{k!}{2}m(m-2k+3)\,.
\end{align*}
In particular the coefficients of $p^\ell$ with $\ell\ge 2$ are always positive. Regarding the linear coefficient
\[
u(k,m)\coloneqq e_{k-1}^{(k)}-k!(k+m) = k!\left(\sum_{i=1}^k\frac{1}{i}-k-m\right)\,,
\]
it is not difficult to verify that $u(k,m)<0$ for all $m\ge 1$.
This implies that there is always at least one change of sign in $h_m^{(k)}(p)$, but no more than two.
More precisely, the number of sign changes depends on the sign of the constant term
\[
v(k,m)\coloneqq\frac{k!}{2}m(m-2k+3)
\]
of $h_m^{(k)}(p)$. In particular $v(k,m)>0$ for all $m\ge 2k-2$.
Therefore, applying Descartes' rule of signs, the polynomial $h_m^{(k)}(p)$ has exactly one positive root for $m\in[2k-3]$, and either two or zero real positive roots for $m\ge 2k-2$.
Since for $m\in[2k-3]$ the polynomial $h_m^{(k)}(p)$ has exactly one positive root, say $p^{(k)}$, then for $p\ge p^{(k)}$, the value of $h_m^{(k)}(p)$ is positive. This proves part $(a)$ of the statement.
To prove $(b)$ and $(c)$, it remains to show that $h_m^{(k)}$ has exactly two positive real roots for finitely many integers $m\ge 2k-2$ and then for sufficiently large $m$, it has no positive real roots.

\noindent First, suppose that $k\in\{3,4,5\}$. In particular, we have
\begin{align}
\begin{split}
    h_m^{(3)}(p) &= p^3+6p^2-(6m+7)p+3m(m-3)\\
    h_m^{(4)}(p) &= p^4+10p^3+35p^2-2(12m+23)p+12m(m-5)\\
    h_m^{(5)}(p) &= p^5+15p^4+85p^3+225p^2-2(60m+163)p+60m(m-7)
\end{split}
\end{align}
The polynomial $h_m^{(3)}(p)$ has one positive root for $m\in [3]$, two distinct positive roots for $m\in\{4,5,6\}$, and no positive root for all $m\ge 7$.
The polynomial $h_m^{(4)}$ has one positive root for $m\in[5]$, two distinct positive roots for $m\in\{6,7\}$, and no positive root for all $m\ge 8$.
Instead the polynomial $h_m^{(5)}$ has one positive root for $m\in[7]$, two distinct positive roots for $m\in\{8,9\}$, and no positive root for all $m\ge 10$.
The approximate values of the largest positive roots $p^{(k)}$ of $h_m^{(3)}$, $h_m^{(4)}$ and $h_m^{(5)}$ are displayed in Table \ref{tab: values p0 k}.
\begingroup
\renewcommand*{\arraystretch}{1.3}
\begin{table}[ht]
\begin{tabular}{c|c|c|c|c|c|c|c|c|c}
$m$ & $1$ & $2$ & $3$ & $4$ & $5$ & $6$ & $7$&$8$& $9$ \\\hline
$p^{(3)}$ & $2$ & $2.51$ & $2.83$ & $3$ &$3$ &$2.56$ &&& \\\hline
$p^{(4)}$ & $1.75$ & $2.11$ & $2.33$ & $2.46$ & $2.50$ &  $2.45$ & $2.23$&& \\\hline
$p^{(5)}$ & $1.63$& $1.93$ & $2.12$ & $2.24$ &$2.31$ &$2.33$ & $2.29$ & $2.17$ &$1.87$\\\hline
\end{tabular}
\vspace{5pt}
\caption{Approximate values of the largest root $p^{(k)}$ of the polynomial $h_m^{(k)}$ in \eqref{eq: h_m k} for $k\in[3,4,5]$. The blank entries correspond to values of $m$ and $k$ such that $h_m^{(k)}$ has no positive root.}\label{tab: values p0 k}
\end{table}
\endgroup

\noindent Now assume $k\ge 6$. We need the following claim.

\noindent{\bf Claim 1:} For any $k\ge 6$, there exists an integer $m^*\ge 1$ such that for all $m\ge m^*$ we have
\begin{equation}\label{eq: inequality u v}
u(k,m)^2\le v(k,m)e_{k-2}^{(k)}\,.
\end{equation}

Expanding the polynomials $u(k,m)$ and $v(k,m)$ and dividing by $k!$, the inequality \eqref{eq: inequality u v} is equivalent to
\begin{equation}\label{eq:quadratic m}
k!(k+m)^2-2(k+m)e_{k-1}^{(k)}-\frac{1}{2}m(m-2k+3)e_{k-2}^{(k)}+k!\left(\sum_{i=1}^k\frac{1}{i}\right)^2\le 0\,.
\end{equation}
The left-hand side of \eqref{eq:quadratic m} is the quadratic polynomial $H_k(m)=\alpha_2m^2+\alpha_1m+\alpha_0$, where
\begin{align}
\begin{split}
\alpha_2 &= k!-\frac{1}{2}e_{k-2}^{(k)}\\
\alpha_1 &= 2k!k+\frac{2k-3}{2}e_{k-2}^{(k)}-2e_{k-1}^{(k)}\\
\alpha_0 &= k^2k!-2k\,e_{k-1}^{(k)}+k!\left(\sum_{i=1}^k\frac{1}{i}\right)^2\,.
\end{split}
\end{align}

The polynomial $H_k(m)$ has at most two roots, and an upper bound $m^*$ for these roots can be obtained using the bounds provided by Lagrange or Cauchy \cite{hirst1997bounding}.
Since the leading coefficient $\alpha_2$ is negative for all $k\ge 6$, we conclude that $H_k(m)\le 0$ for all $m\ge m^*$. This concludes the proof of Claim 1.

Now using Proposition \ref{prop: property *} and Claim 1, we show that the polynomial $h^{(k)}_m(p)$ has no positive root for sufficiently large $m$, thus verifying (3).
To simplify furthermore, let
\[
s(p)\coloneqq\sum_{j=0}^{k-2} e_j^{(k)}p^{k-j}\,.
\]
Consider the product depending on the parameter $b\in\R$
\[
h^{(k)}_m(p)(p+b)=s(p)\,p +\left(u(k,m)+b\,\frac{s(p)}{p^2}\right )p^2+\left[v(k,m)+b\,u(k,m)\right]p+b\,v(k,m)\,.
\]
Note that all coefficients of $s(p)p$ are always positive.
If we choose $b\ge 0$ and $m\ge 2k-2$, then the constant term of $h^{(k)}_m(p)(p+b)$ is nonnegative.
Moreover, the inequality \eqref{eq: inequality u v} of Claim 1 and the fact that $u(k,m)<0$ for all $m\ge 1$ imply that there exists an integer $m^*\ge 1$ and a real number $b_0\ge 0$ such that 
\[
0\le-\frac{u(k,m)}{e_{k-2}^{(k)}}\le b_0\le -\frac{v(k,m)}{u(k,m)}\quad\forall\,m\ge m^*\,.
\]
This choice of $b_0$ implies that $v(k,m)+b_0\,u(k,m)\ge 0$ and $u(k,m)+b_0\,e^{(k)}_{k-2}\ge 0$ for all $m\ge m^*$. Therefore, all coefficients of $h^{(k)}_m(p)\,(p+b_0)$ are nonnegative for all $m\ge m^*$. Proposition \ref{prop: property *} with $g=p+b_0$ implies that $h^{(k)}_m(p)$ does not have any positive root for all $m\ge m^*$. Since $h^{(k)}_m(p)$ has either zero or two positive roots for $m\ge 2k-2$, then $h_m^{(k)}$ has exactly two positive roots for all $2k-2\le m\le m^*-1$. This completes the proof.\qedhere
\end{proof}

The following Macaulay2 code \cite{GS} computes the dimension of the $k$th-order factor analysis model. After defining the input variables and the components of the map $\varphi_k$ in \eqref{eq: def phi k}, we plug in random values of the parameters in the Jacobian matrix of $\varphi_k$, and we compute its rank:

{\small
\begin{Verbatim}[commandchars=\\\{\}]
dimFactorAnalysisModel = (k,p,m,F) -> (
    R := F[ε_(2,1)..ε_(k,p),
           δ_(2,1)..δ_(k,m),
           (\textcolor{magenta}{flatten} \textcolor{blue}{for} i \textcolor{blue}{in} 1..p \textcolor{blue}{list} \textcolor{blue}{for} j \textcolor{blue}{in} 1..\textcolor{magenta}{min}(i,m) \textcolor{blue}{list} λ_(i,j))];
    \textcolor{orange}{-- we construct the matrix Λ:}
    varδ := \textcolor{magenta}{flatten} \textcolor{blue}{for} i \textcolor{blue}{in} 2..k \textcolor{blue}{list for} j \textcolor{blue}{in} 1..m \textcolor{blue}{list} δ_(i,j);
    varΛ := \textcolor{magenta}{flatten} \textcolor{blue}{for} i \textcolor{blue}{in} 1..p \textcolor{blue}{list for} j \textcolor{blue}{in} 1..\textcolor{magenta}{min}(i,m) \textcolor{blue}{list} λ_(i,j);
    Λ0 := \textcolor{magenta}{mutableMatrix}(R,p,m);
    \textcolor{blue}{for} i \textcolor{blue}{in} 1..p \textcolor{blue}{do} \textcolor{blue}{for} j \textcolor{blue}{in} 1..min(i,m) \textcolor{blue}{do} Λ0_(i-1,j-1) = λ_(i,j);
    Λ := \textcolor{magenta}{matrix} Λ0;
    \textcolor{orange}{-- we define the map φ_k}
    ind := \textcolor{magenta}{apply}(k-1, s ->
    \quad \textcolor{magenta}{sort toList}(\textcolor{magenta}{set}(\textcolor{magenta}{subsets}(\textcolor{magenta}{flatten apply}(p, i -> \textcolor{magenta}{toList}(k:(i+1))),s+2))));
    \textcolor{blue}{for} r \textcolor{blue}{in} 2..k \textcolor{blue}{do for} i \textcolor{blue}{in} ind#(r-2) \textcolor{blue}{do} (
    \quad \textcolor{blue}{if} #set(i)==1 \textcolor{blue}{then} E_(\textcolor{magenta}{toSequence}({r}|i)) = ε_(r,i#0) \textcolor{blue}{else} E_(\textcolor{magenta}{toSequence}({r}|i)) = 0);
    φ := \textcolor{magenta}{apply}(k-1, s- > \textcolor{magenta}{apply}(ind#s, i ->
    \quad E_(\textcolor{magenta}{toSequence}({s+2}|i))+\textcolor{magenta}{sum}(m, l -> δ_(s+2,l+1)*\textcolor{magenta}{product}(s+2, j -> Λ_(i#j-1,l)))));
    Jφ := \textcolor{magenta}{diff}(\textcolor{magenta}{vars} R, \textcolor{magenta}{transpose matrix}\{\textcolor{magenta}{flatten} φ\});
    \textcolor{orange}{-- we plug in random values of the parameters:}
    subvarδ := \textcolor{magenta}{apply}(varδ, s -> s => \textcolor{magenta}{random}(F));
    subvarΛ := \textcolor{magenta}{apply}(varΛ, s -> s => \textcolor{magenta}{random}(F));
    subJφ := \textcolor{magenta}{sub}(\textcolor{magenta}{sub}(Jφ, subvarδ|subvarΛ),F);
    \textcolor{blue}{return} \textcolor{magenta}{rank} subJφ;
    )
\end{Verbatim}
}

The following lines check that the dimension computed coincides with the formula \eqref{eq: dim k} for the chosen input $(k,p,m,\F)$, where $\F$ denotes the base field (in our example, $\F=\Q$):

{\small
\begin{Verbatim}[commandchars=\\\{\}]
(k,p,m,F) = (3,4,3,\textcolor{purple}{QQ}); \textcolor{orange}{-- use your favourite input parameters}
d1 = dimFactorAnalysisModel(k,p,m,F);
d2 = (k-1)*p+(k-2)*m+\textcolor{magenta}{min}(p*m-\textcolor{magenta}{binomial}(m,2),\textcolor{magenta}{binomial}(p+k-1,k)-p);
d1 == d2 \textcolor{orange}{-- d1 = d2 = 20 for (k,p,m) = (3,4,3)}
\end{Verbatim}
}

\section*{Acknowledgements}

We would like to thank the unknown referees for their valuable comments, which revealed a mistake in the first version of Theorem \ref{thm: dimension k order factor analysis model}.\\
We thank the organizers of the joint event among Aalto University, Imperial College, and TU M\"unchen that took place in M\"unchen in March 2022, in particular Mathias Drton, Alexandros Grosdos, and Nils Sturma who presented open problems on factor analysis model, and for the interesting discussions that improved our paper.
The project was funded by the TUM Global Incentive Fund ``Algebraic Methods in Data Science''.\\
The second author conducted most of the research on this project at Aalto University, and both authors were partially supported by the Academy of Finland Grant No. 323416. The second author is currently supported by a KTH grant by the Verg foundation and Brummer \& Partners MathDataLab.

\bibliographystyle{alpha}
\bibliography{references.bib}

\newcommand{\etalchar}[1]{$^{#1}$}
\begin{thebibliography}{ADG{\etalchar{+}}21}

\bibitem[ADG{\etalchar{+}}21]{amendola2021third}
Carlos Am{\'e}ndola, Mathias Drton, Alexandros Grosdos, Roser Homs, and Elina
  Robeva.
\newblock Third-order moment varieties of linear non-gaussian graphical models.
\newblock {\em \arxiv{2112.10875}}, 2021.

\bibitem[Aka98]{akaike1998information}
Hirotogu Akaike.
\newblock Information theory and an extension of the maximum likelihood
  principle.
\newblock In {\em Selected papers of hirotugu akaike}, pages 199--213.
  Springer, 1998.

\bibitem[BA04]{burnham2004multimodel}
Kenneth~P Burnham and David~R Anderson.
\newblock Multimodel inference: understanding aic and bic in model selection.
\newblock {\em Sociological methods \& research}, 33(2):261--304, 2004.

\bibitem[BdL87]{bekker1987rank}
Paul~A. Bekker and Jan de~Leeuw.
\newblock The rank of reduced dispersion matrices.
\newblock {\em Psychometrika}, 52(1):125--135, 1987.

\bibitem[BHD15]{bai2015application}
Anita Bai, Swati Hira, and PS~Deshpande.
\newblock An application of factor analysis in the evaluation of country
  economic rank.
\newblock {\em Procedia Computer Science}, 54:311--317, 2015.

\bibitem[Bra20]{braglio2020factor}
Ethan Braglio.
\newblock Factor analysis of rust belt and southern senate elections.
\newblock {\em Ramifications}, 2(1):2, 2020.

\bibitem[CDJ74]{clark1974application}
David Clark, Wayne K.~D. Davies, and Ronald~J. Johnston.
\newblock The application of factor analysis in human geography.
\newblock {\em Journal of the Royal Statistical Society: Series D (The
  Statistician)}, 23(3-4):259--281, 1974.

\bibitem[CJ10]{comon2010handbook}
Pierre Comon and Christian Jutten.
\newblock {\em Handbook of Blind Source Separation}.
\newblock Academic Press, Oxford, 2010.

\bibitem[DSS07]{drton2007algebraic}
Mathias Drton, Bernd Sturmfels, and Seth Sullivant.
\newblock Algebraic factor analysis: tetrads, pentads and beyond.
\newblock {\em Probability Theory and Related Fields}, 138(3):463--493, 2007.

\bibitem[GS97]{GS}
Daniel Grayson and Michael Stillman.
\newblock Macaulay 2--a system for computation in algebraic geometry and
  commutative algebra, 1997.

\bibitem[Har76]{harman1976modern}
Harry~H. Harman.
\newblock {\em Modern factor analysis}.
\newblock University of Chicago press, 1976.

\bibitem[HLP52]{hardy1952inequalities}
Godfrey~H. Hardy, John~E. Littlewood, and George P\'{o}lya.
\newblock {\em Inequalities}.
\newblock Cambridge, at the University Press, 1952.
\newblock 2d ed.

\bibitem[HM97]{hirst1997bounding}
Holly~P. Hirst and Wade~T. Macey.
\newblock Bounding the roots of polynomials.
\newblock {\em The College Mathematics Journal}, 28(4):292--295, 1997.

\bibitem[McC87]{mccullagh1987tensor}
Peter McCullagh.
\newblock {\em Tensor methods in statistics}.
\newblock Monographs on Statistics and Applied Probability. Chapman \& Hall,
  London, 1987.

\bibitem[P\'28]{polya1904positive}
George P\'olya.
\newblock \"{U}ber positive {D}arstellung von {P}olynomen.
\newblock {\em Vierteljahrsschrift der Naturforschenden Gesellschaft in
  Z\"urich}, 73:141--145, 1928.

\bibitem[RS21]{Robeva_2021}
Elina Robeva and Jean-Baptiste Seby.
\newblock Multi-trek separation in linear structural equation models.
\newblock {\em {SIAM} Journal on Applied Algebra and Geometry}, 5(2):278--303,
  jan 2021.

\bibitem[Sch78]{schwarz1978estimating}
Gideon Schwarz.
\newblock Estimating the dimension of a model.
\newblock {\em The annals of statistics}, pages 461--464, 1978.

\bibitem[SD89]{samal1989clinical}
M.~S{\'a}mal and Z.~Dienstbier.
\newblock Clinical applications of factor analysis in nuclear medicine.
\newblock {\em Bratislavske Lekarske Listy}, 90(10):732--739, 1989.

\bibitem[Spe04]{spearmann1904}
Charles Spearman.
\newblock General intelligence, objectively determined and measured.
\newblock {\em American Journal of Psychology}, 15:201--293, 1904.

\bibitem[Spe27]{spearman1961abilities}
Charles Spearman.
\newblock The {A}bilities of {M}an.
\newblock {\em Macmillan, New York}, 1927.

\bibitem[Sta13]{stanley2013algebraic}
Richard~P. Stanley.
\newblock {\em Algebraic combinatorics}.
\newblock Undergraduate Texts in Mathematics. Springer, New York, 2013.
\newblock Walks, trees, tableaux, and more.

\bibitem[Sug78]{sugiura1978further}
Nariaki Sugiura.
\newblock Further analysts of the data by {A}kaike's information criterion and
  the finite corrections: Further analysts of the data by {A}kaike's.
\newblock {\em Communications in Statistics-theory and Methods}, 7(1):13--26,
  1978.

\end{thebibliography}
\end{document}